\numberwithin{equation}{section}
\newtheorem{thm}{Theorem}[section]
\newtheorem{ptn}[thm]{Proposition}
\newtheorem{clm}[thm]{Claim}
\newtheorem{cor}[thm]{Corollary}
\newtheorem{lem}[thm]{Lemma}
\newtheorem{con}[thm]{Conjecture}
\theoremstyle{definition}
\newtheorem{dfn}[thm]{Definition}
\newtheorem{exm}[thm]{Example}
\theoremstyle{remark}
\newtheorem{rmk}[thm]{Remark}
\newcommand{\wt}{\widetilde}
\newcommand{\ol}{\overline}
\newcommand{\C}{\mathbb{C}}
\newcommand{\calH}{\mathcal{H}}
\newcommand{\N}{\mathbb{N}}
\newcommand{\PP}{\mathbb{P}}
\newcommand{\Q}{\mathbb{Q}}
\newcommand{\R}{\mathbb{R}}
\newcommand{\Ht}[1]{\ol{#1}^\mathsf{T}}
 \DeclareMathOperator{\Int}{int}
\DeclareMathOperator{\diag}{diag} \DeclareMathOperator{\re}{Re}
\DeclareMathOperator{\im}{Im} \DeclareMathOperator{\Aut}{Aut}
\def\K{K\"ahler }
\title{Chebyshev potentials, Fubini--Study metrics, and geometry of the space of K\"ahler metrics\thanks{Research supported
by NSF grants DMS-1906370,2204347.}}
\author{Chenzi Jin, Yanir A. Rubinstein }
\date{24 October 2022}
\def\thanks#1{\protected@xdef\@thanks{\@thanks\protect\footnotetext{#1}}}
\begin{document}

\maketitle

\begin{abstract}
The Chebyshev potential of a K\"ahler potential on a projective
variety, introduced by Witt Nystr\"om, is a convex function defined
on the Okounkov body. It is a generalization of the symplectic
potential of a torus-invariant \K potential on a toric variety,
introduced by Guillemin, that is a convex function on the Delzant
polytope. A folklore conjecture asserts that a curve of Chebyshev
potentials associated to a curve in the space of \K potentials is
linear in the time variable if and only if the latter curve is a
geodesic in the Mabuchi metric. This is classically true in the
special toric setting, and in general Witt Nystr\"om established the
sufficiency. The goal of this article is to disprove this
conjecture. More generally, we characterize the Fubini--Study
geodesics for which the conjecture is true on projective space. The
proof involves explicitly solving the Monge--Amp\`ere equation
describing geodesics on the subspace of Fubini--Study metrics and
computing their Chebyshev potentials.

\end{abstract}


\section{A folklore conjecture}

Consider first the simpler setting of a toric manifold $X$. Let
$\Delta\subseteq\R_{\geqslant0}^n$ be the moment polytope associated
to a line bundle $L_\Delta\to X$. On the torus $(\C^*)^n\cong
X_\mathrm{open}\subseteq X$ define
$$
\Theta:z\mapsto\left(\log\left|z_1\right|,\ldots,\log\left|z_n\right|\right).
$$
Of course, $\Theta$ is not invertible, but if a positively curved
metric $e^{-\varphi}$ on $L_\Delta$ (i.e., $\varphi$ is
plurisubharmonic (psh)) is torus-invariant on $X_\mathrm{open}$, the
function $\varphi_\Theta=\varphi\circ\Theta^{-1}$ is well-defined
and convex on $\R^n$.

The geodesic equation on the space of Hermitian metrics on $L$ with
positive curvature endowed with the Mabuchi $L^2$ metric becomes, as
observed by Semmes, and later Donaldson, the homogeneous complex
Monge--Amp\`ere equation. This is an equation for a complex curve
$t+\sqrt{-1} s\mapsto \varphi(t)$ with $\varphi$ now psh in all
$n+1$ variables. Under the toric symmetry assumption, the equation
simplifies to the homogeneous real Monge--Amp\`ere equation (HRMA)
$$\begin{aligned}
\det\left(\nabla_{t,x}^2\varphi_\Theta\right)&=0,&\mbox{on
}\left[0,T\right]\times\R^n
\end{aligned}$$
for a convex function in all $n+1$ variables. For each fixed time
$t$, the partial Legendre transform $\mathcal{L}$ maps
$\varphi_\Theta(t,\,\cdot\,)$ to a convex function on $\Delta$.
Under this correspondence, the geodesic equation further reduces to
$\frac{d^2}{dt^2}\mathcal{L}\varphi_\Theta(t,\,\cdot\,)=0$. That is,
$t\mapsto \varphi(t)$
 is a geodesic precisely when $t\mapsto\mathcal{L}\varphi_\Theta(t)$ is affine in $t$. These facts go back to Mabuchi, Semmes,  and
Donaldson  \cite[Section 1]{Sem}, \cite[Section 3.1]{Mab},
\cite[Section 6]{Don}.

From now on, let $X$ be a projective manifold and $L\rightarrow X$ a
line bundle with $c_1(L)$ a K\"ahler class. Denote by $$ \calH_L
$$ the space
of K\"ahler potentials whose associated \K forms represent $c_1(L)$.
The Chebyshev potential $c[\varphi]$ of $\varphi$, introduced by
Witt Nystr\"om \cite[Section 5]{Wit}, is a generalization of the
symplectic potential, introduced by Guillemin \cite[Section 4]{Gui},
to any projective, but not necessarily toric, manifold. It is
defined on the interior of the Okounkov body
$\Delta(X,L,Y_{\bullet})$ of the line bundle $L$, itself a
generalization of the (Delzant) moment polytope $\Delta$---see
Definition \ref{Chebydef}.

In the special case of toric $X$, the symplectic potential and the
Chebyshev potential are related by the formula
$
c\left[\varphi\right]=2\mathcal{L}\left(\frac{1}{2}\varphi_\Theta\right)
$ \cite[Section 10.3]{Wit}.
%
Therefore, it is natural to pose the following conjecture, first
explicitly stated by Reboulet  \cite[Theorem A]{Reb}, that, by the
above discussion, holds in the toric case.
\begin{con}\label{conj}
The segment $t\mapsto\varphi(t,\,\cdot\,)\in \calH_L $ is a geodesic
if and only if $t\mapsto c[\varphi(t,\,\cdot\,)]$ is affine.
\end{con}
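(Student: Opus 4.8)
Since the sufficiency in Conjecture \ref{conj} is already known (Witt Nystr\"om), I would concentrate on the converse, and I in fact expect it to fail. The plan is to settle the conjecture completely on the subspace of Fubini--Study metrics on $\PP^n$: to identify the Fubini--Study geodesics along which $t\mapsto c[\varphi(t,\,\cdot\,)]$ is affine, and to exhibit ones for which it is not. \textbf{Step 1 (Fubini--Study geodesics).} The Bergman space $\mathcal{B}_1\subseteq\calH_{\mathcal{O}(1)}$ of Fubini--Study potentials is the symmetric space $\mathrm{GL}(n+1,\C)/U(n+1)$ of positive Hermitian forms, and it is totally geodesic for the Mabuchi metric; concretely every Fubini--Study geodesic is $\varphi(t,[z])=\log\langle H(t)z,z\rangle-\log\|z\|^2$ with $H(t)=H_0^{1/2}\exp(tS)H_0^{1/2}$, $S$ Hermitian. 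I would either invoke this or prove it by hand: complexify $t\mapsto t+\sqrt{-1}\sigma$ and check that the resulting function is psh in all $n+1$ variables with rank-deficient complex Hessian, i.e. solves the homogeneous complex Monge--Amp\`ere equation (the Semmes--Donaldson geodesic equation). Diagonalizing $S$ and absorbing the ambient unitary, $\varphi(t,[z])=\log\big(\sum_j e^{\lambda_j t}|(Uz)_j|^2\big)-\log\|z\|^2$ for a unitary $U$, the case $U$ diagonal being the torus-invariant (toric) one.

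\textbf{Step 2 (computing the Chebyshev potential).} Fix the standard coordinate flag $Y_\bullet$, so $\Delta(\PP^n,\mathcal{O}(1),Y_\bullet)$ is the unit simplex and the monomial $z^\alpha$, $|\alpha|=k$, has valuation $\alpha$. By Definition \ref{Chebydef}, $c[\varphi(t)](p)$ is the limit, as $\alpha/k\to p$, of $\tfrac1k\log$ of the extremal quantity $\inf\{\,\|s\|_{k,\varphi(t)}:s\in z^\alpha+\mathcal{F}^{>\alpha}H^0(\mathcal{O}(k))\,\}$, the minimal norm of a section with prescribed leading term relative to the flag-filtration. Substituting $w=H(t)^{1/2}z$ turns this into the same problem for the \emph{standard} Fubini--Study norm but over the $H(t)$-twisted affine subspace $\{g:g\circ H(t)^{1/2}\in z^\alpha+\mathcal{F}^{>\alpha}\}$. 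For the untwisted problem (the toric case) a torus-averaging argument shows the extremizer is the monomial $z^\alpha$ itself, so that letting $k\to\infty$ gives $c[\varphi(t)](p)=\sum_j p_j\log p_j-t\sum_j\lambda_j p_j$ (up to the normalizing convention), affine in $t$ --- recovering the toric/Witt Nystr\"om case. Once $U$ is non-diagonal, however, the twisting forces the extremizer to acquire cross-terms and its asymptotic norm picks up a genuinely non-affine dependence on $t$. I would carry out the asymptotics either with the sup-norm directly or, more conveniently, with the $L^2$-norm, where the extremal value is the distance to the affine subspace and is computed by Gram--Schmidt in the monomial order, using that the two norms yield the same Chebyshev potential to leading order.

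\textbf{Step 3 (characterization and counterexample).} Tracking which twistings $U$ preserve affineness, I expect the answer to be that $t\mapsto c[\varphi(t)]$ is affine exactly when the geodesic is compatible with the flag, in the sense that each subspace of the flag dual to $Y_\bullet$ is invariant under $H(t)$ --- equivalently, $H(t)$ is diagonalizable by a flag-preserving (upper-triangular) change of coordinates; on $\PP^1$ this says that the flag point $Y_1$ is one of the two endpoints of the geodesic in $\mathbb{H}^2=\mathrm{GL}(2,\C)/U(2)$. To disprove Conjecture \ref{conj} it then suffices to take one geodesic outside this class and compute $c[\varphi(t)]$ explicitly: the simplest is $n=1$ with $H(t)=\exp(tS)$, $S=\left(\begin{smallmatrix}0&1\\1&0\end{smallmatrix}\right)$, so that $\varphi(t,[z])=\log\big(\tfrac{e^{2t}}{2}|z_0+z_1|^2+\tfrac{e^{-2t}}{2}|z_0-z_1|^2\big)-\log\|z\|^2$; here $[1:0]$ is not an endpoint, and the resulting Chebyshev potential will be, for each fixed $p\in(0,1)$, an explicit convex-but-not-affine function of $t$.

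\textbf{Main obstacle.} The hard part is Step 2: evaluating the Chebyshev transform along a \emph{non-toric} Fubini--Study geodesic. The monomial is no longer extremal for the twisted norm, so one must either identify the true extremizer in closed form or control the Gram--Schmidt process for the twisted inner product while keeping the valuation/filtration bookkeeping consistent, and then extract the $k\to\infty$ asymptotics. Obtaining a formula for $c[\varphi(t)](p)$ clean enough to read off exactly when it is affine in $t$ is the technical core of the argument.
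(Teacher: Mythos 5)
Your route is essentially the paper's: restrict to Fubini--Study potentials on $\PP^n$, classify their geodesics as $P(t)=\Ht{A}e^{tD}A$ (your $H_0^{1/2}e^{tS}H_0^{1/2}$), compute the Chebyshev potential in closed form, and read off non-affineness; your proposed counterexample $H(t)=\exp(tS)$ with $S=\left(\begin{smallmatrix}0&1\\1&0\end{smallmatrix}\right)$, i.e. $H(t)=\left(\begin{smallmatrix}\cosh t&\sinh t\\ \sinh t&\cosh t\end{smallmatrix}\right)$, is literally the one used in the paper. Two remarks on the parts you leave open. First, in Step 1, verifying that the complexified curve solves the HCMA only gives one direction; to conclude that \emph{all} Fubini--Study geodesics have this form you also need uniqueness of the (continuous) geodesic with given endpoints (Chen), which is how the paper argues, constructing the candidate via Bergman geodesics rather than by a direct Hessian computation.

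Second, and more importantly, the ``main obstacle'' you identify in Step 2 is a genuine gap in your proposal, and the idea that closes it is the one piece of the paper's argument you do not supply: the Cholesky-type factorization $P=\Ht{L}DL$ with $L$ lower \emph{unitriangular} and $D=\diag\{\mu_0(P),\ldots,\mu_n(P)\}$, where $\mu_i(P)=\det_i(P)/\det_{i+1}(P)$ is a ratio of trailing principal minors. Because lower unitriangular substitutions $t_i=s_i+\sum_{j<i}l_{ij}s_j$ preserve both the flag filtration and the monic leading term, the sections $t^I$ are monic with $\ell(t^I)=z^I$, and a polar-coordinate computation (the cross terms integrate to zero) shows they are mutually orthogonal for $\mathrm{Hilb}_m(\varphi_P,\mu)$; by the Gram--Schmidt criterion (an orthogonal monic basis consists of Chebyshev sections) they \emph{are} the extremizers in closed form, so no asymptotic control of a twisted Gram--Schmidt process is needed. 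Their norms are multivariate beta integrals, and Stirling gives $c[\varphi_P](\alpha)=\sum_i\alpha_i\log\frac{\alpha_i}{\mu_i(P)}+(1-\sum_i\alpha_i)\log\frac{1-\sum_i\alpha_i}{\mu_n(P)}$. Hence $t\mapsto c[\varphi_{P(t)}]$ is affine iff each $\log\mu_i(P(t))$ is affine, which is exactly your ``flag-compatibility'' guess made precise ($P(t)=\Ht{L}e^{tK}L$ with $L$ lower triangular); for $P(t)=\exp(tS)$ one gets $\mu_0=1/\cosh t$ and $\mu_1=\cosh t$, which are not log-affine, completing the disproof.
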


In fact, one direction of Conjecture \ref{conj} is known. The main
result of Witt Nystr\"om \cite[Theorem 6.2]{Wit} shows that the
Aubin--Mabuchi energy $\mathcal{E}:\calH_L\times\calH_L\to\R$,
$$
\mathcal{E}\left(\varphi_0,\varphi\right):=\frac{1}{n+1}\sum_{j=0}^n\int_X\left(\varphi_0-\varphi\right)\left(dd^c\varphi_0\right)^j\wedge\left(dd^c\varphi\right)^{n-j},
$$
defined in terms of \K potentials on $X$ is already encoded in the
Chebyshev potentials and the Okounkov body,
$$
\mathcal{E}\left(\varphi_0,\varphi\right)=n!\int_{\Delta\left(L\right)}\left(c\left[\varphi_0\right]-c\left[\varphi\right]\right)d\mu,
$$
where $d\mu$ is the standard Lebesgue measure.
As first observed by Mabuchi in the smooth setting (and later
generalized by Berman--Boucksom \cite[(4.1)]{BBInv}),
$t\mapsto\varphi(t,\,\cdot\,)$ is a geodesic if and only if
$t\mapsto\mathcal{E}\left(\varphi\left(t,\,\cdot\,\right),\varphi\left(0,\,\cdot\,\right)\right)$
is affine \cite[Remark 3.3]{Mab}. This implies that
$\varphi(t,\,\cdot\,)$ is a geodesic if and only if the integral $
\int_{\Delta\left(L\right)}c\left[\varphi\left(t,\,\cdot\,\right)\right]
$ is affine in $t$, proving one direction
of Conjecture \ref{conj}.


Our main result is an explicit computation of the Chebyshev
potential of Fubini--Study metrics. This seems to the first explicit
computation of the Chebyshev potential outside of toric metrics and
the Riemann sphere \cite[\S10]{Wit}.

\begin{thm}\label{CompCheby}
Let $[Z_0:\ldots:Z_n]$ be standard homogeneous coordinates on
$\PP^n$. Fix the flag $\PP^n=Y_0\supseteq\cdots\supseteq Y_n=\{p\}$
where $$ Y_i=V(Z_0,\ldots,Z_{i-1}).
$$
Choose the standard coordinate chart on $U_n=\{Z_n\neq0\}$ and the
standard local trivialization of the hyperplane bundle $H$ over
$U_n$. Associate a Fubini--Study potential ${\varphi_P}$ to any
positive definite Hermitian matrix $P$ of order $n+1$ given by
$$
\varphi_P:=\log\Ht{z}Pz\in\calH_L,
$$
where $z=(z_0,\ldots,z_{n-1},1)$. Then its Chebyshev potential is
given by
$$
c\left[\varphi_P\right]\left(\alpha\right)=\sum_{i=0}^{n-1}\alpha_i\log\frac{\alpha_i}{\mu_i\left(P\right)}+\left(1-\sum_{i=0}^{n-1}\alpha_i\right)\log\frac{\left(1-\sum\limits_{i=0}^{n-1}\alpha_i\right)}{\mu_i\left(P\right)},
$$
where $\alpha= (\alpha_0,\ldots,\alpha_{n-1})$ lies in the simplex
\eqref{simplexEq} $\Delta(X,H,Y_{\bullet})\subset  \R^n$, and
$\mu_i$ is defined in (\ref{defmu}).
\end{thm}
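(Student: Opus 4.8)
The plan is to reduce the computation to the torus-invariant case by means of a flag-preserving automorphism of $\PP^n$, and then to evaluate an explicit partial Legendre transform. Recall from Definition \ref{Chebydef} that $c[\varphi](\alpha)$ is, up to the normalization fixed there, the limit over $k\to\infty$ of $-\tfrac1k\log$ of the infimum of the supremum norms $\sup_X|s|_{k\varphi}$ taken over those sections $s\in H^0(\PP^n,kH)$ whose $Y_\bullet$-valuation equals $k\alpha$ and whose leading coefficient equals $1$; recall also that for a torus-invariant potential $\psi$ on a toric variety one has $c[\psi]=2\mathcal{L}\bigl(\tfrac12\psi_\Theta\bigr)$. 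For $(\PP^n,H)$ and the flag $Y_\bullet$, a section of $kH$ restricted to $U_n$ is a polynomial of degree $\le k$ in $z_0,\dots,z_{n-1}$, its valuation is the lexicographically smallest exponent vector occurring in it, the Okounkov body is the simplex \eqref{simplexEq}, and the monomials realize all valuations at each level $k$.

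First I would carry out the reduction. A matrix $g\in GL(n+1,\C)$ preserves each subspace $V(Z_0,\dots,Z_{i-1})$---hence the whole flag $Y_\bullet$---precisely when $g$ is lower triangular. Since $P$ is positive definite Hermitian, Gaussian elimination performed from the bottom-right corner produces the ``reverse Cholesky'' factorization $P=\Ht{h}\,\Lambda\,h$, with $h$ lower triangular and unipotent and $\Lambda=\diag(\mu_0(P),\dots,\mu_n(P))$, where $\mu_i(P)=\det P^{(i)}/\det P^{(i+1)}$ for the trailing principal submatrices $P^{(i)}=(P_{jk})_{j,k\geqslant i}$ (with $P^{(n+1)}:=1$); checking that this matches the definition \eqref{defmu} of $\mu_i$ is a short linear-algebra verification. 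Regarding $g:=h^{-1}$, again lower triangular and unipotent, as an automorphism of $\PP^n$, a direct computation shows that $g^{\ast}\varphi_P$ and $\varphi_\Lambda$ are local expressions for one and the same Hermitian metric on $H$---they differ only by the pluriharmonic term produced by the change of local trivialization of $H$---so they have the same Chebyshev potential. Moreover $c[\,\cdot\,]$ is unchanged under pullback by the flag-preserving unipotent $g$: the supremum norm on $kH$ is intrinsic to $\PP^n$, while $g^{\ast}Z_i=Z_i+(\text{a linear form in }Z_0,\dots,Z_{i-1})$ shows that $g^{\ast}$ carries a section with leading monomial $z^\beta$ to one with the same leading monomial and leading coefficient, hence preserves both the valuation and the normalization appearing in Definition \ref{Chebydef}. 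Combining these facts gives $c[\varphi_P]=c[\varphi_\Lambda]$, so it suffices to treat the diagonal case $P=\Lambda$.

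Next I would do the diagonal (toric) computation. For $P=\Lambda=\diag(\mu_0,\dots,\mu_n)$ the potential $\varphi_\Lambda$ is torus invariant, with
\[
(\varphi_\Lambda)_\Theta(x)=\log\Bigl(\,\textstyle\sum_{i=0}^{n-1}\mu_i e^{2x_i}+\mu_n\Bigr),\qquad x\in\R^n,
\]
so by the toric formula $c[\varphi_\Lambda](\alpha)=2\mathcal{L}\bigl(\tfrac12(\varphi_\Lambda)_\Theta\bigr)(\alpha)=\sup_{x\in\R^n}\bigl(2\langle\alpha,x\rangle-(\varphi_\Lambda)_\Theta(x)\bigr)$. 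The substitution $y_i=e^{2x_i}$ turns this into $\sup_{y>0}\bigl(\sum_{i=0}^{n-1}\alpha_i\log y_i-\log(\sum_{i=0}^{n-1}\mu_i y_i+\mu_n)\bigr)$, whose critical-point equations (Lagrange multipliers, equivalently the weighted arithmetic--geometric mean inequality) give $\mu_i y_i\propto\alpha_i$ and $\mu_n\propto 1-\sum_{i=0}^{n-1}\alpha_i$; substituting back yields exactly
\[
c[\varphi_\Lambda](\alpha)=\sum_{i=0}^{n-1}\alpha_i\log\frac{\alpha_i}{\mu_i}+\Bigl(1-\sum_{i=0}^{n-1}\alpha_i\Bigr)\log\frac{1-\sum_{i=0}^{n-1}\alpha_i}{\mu_n},
\]
the asserted formula. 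Should one prefer to avoid citing the toric formula, one can argue directly from Definition \ref{Chebydef}: for a torus-invariant weight the extremal section with a given valuation $p=k\alpha$ is the monomial $z^p$ (by averaging over the torus action and convexity of the supremum norm), and computing $\sup_X|z^p|_{k\varphi_\Lambda}$ by the same optimization reproduces the formula on the lattice $\tfrac1k\mathbb{Z}^n\cap\Delta$ and hence, by the continuity and convexity of $c[\varphi]$, on all of $\Int\Delta$.

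The main obstacle is the reduction step: one must check with care that the Chebyshev potential is genuinely invariant under pullback by a flag-preserving \emph{unipotent} automorphism---that is, that both the $Y_\bullet$-valuation and the leading-coefficient normalization of an arbitrary section are preserved---and keep track of the pluriharmonic discrepancy introduced by the accompanying change of trivialization of $H$. Once this is settled, identifying $\mu_i(P)$ with the reverse-Cholesky pivots and evaluating the Legendre transform are routine.
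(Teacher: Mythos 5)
Your proposal is correct and reaches the stated formula, but by a genuinely different route from the paper in its second half. Both arguments begin with the same decomposition $P=\Ht{L}\Lambda L$ with $L$ lower unitriangular and $\Lambda=\diag(\mu_0(P),\dots,\mu_n(P))$ (your ``reverse Cholesky pivots'' are exactly the quantities of Claim \ref{mu}). From there the paper stays with the $L^2$ definition: it shows by direct integration in polar coordinates that the monomials $t^I$ in the transformed sections $t=Ls$ are orthogonal and monic, hence are the Chebyshev sections by the Gram--Schmidt criterion (Proposition \ref{orthoChebysec}); it then evaluates $\|t^I\|^2$ via the multivariate beta integral and extracts the limit with Stirling's formula. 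You instead transfer the whole problem to the diagonal case by the flag-preserving unipotent automorphism $h^{-1}$ --- which is exactly the content of Remark \ref{cob}, used in the paper only for the geodesic Proposition \ref{geoddecomplem} --- and then quote the toric identity $c[\varphi]=2\mathcal{L}\bigl(\tfrac12\varphi_\Theta\bigr)$ and compute a Legendre transform in closed form. Your approach is shorter and conceptually transparent (it makes clear that every Fubini--Study potential is ``toric up to a unipotent flag-preserving change of coordinates,'' the very point the paper emphasizes in its Discussion), at the cost of leaning on two black boxes: the toric formula of Witt Nystr\"om and the asymptotic equivalence of $L^2$ and sup norms; the paper's computation is self-contained and, as a by-product, exhibits the Chebyshev sections explicitly, which is what makes Corollary \ref{introaff} and the closing discussion possible. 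Your identification of the crux --- that a \emph{unitriangular} lower-triangular substitution preserves both the valuation and the monic normalization --- is correct: with the lexicographic order of Definition \ref{Okb} the variables of higher index are \emph{smaller}, so $Z_i\mapsto Z_i+(\text{linear in }Z_0,\dots,Z_{i-1})$ adds only lex-larger monomials, and $Z_n\mapsto Z_n+\cdots$ contributes a unit with constant term $1$; this is consistent with the paper's warning that only \emph{scalings} of the chart destroy monicity. Two small caveats: your parenthetical restatement of Definition \ref{Chebydef} via $-\tfrac1k\log$ of an infimum of sup norms has the wrong sign and normalization (the definition is $+\tfrac1{m}\log\|Ch_{m,\alpha}\|^2$, i.e.\ asymptotically $+\tfrac2m\log$ of the infimum of sup norms), though this does not enter your actual computation; and the final passage from rational $\alpha$ to all of $\Int\Delta(H)$ should be justified, as in the paper, by the convexity of $c[\varphi]$ (Proposition \ref{cvx}).
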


This result is motivated by Conjecture \ref{conj} but is of
independent interest. It has the following consequence.

\begin{cor}\label{introaff}
For a geodesic $t\mapsto\varphi_{P(t)}\in\calH_L$, $t\mapsto
c[\varphi_{P(t)}]$ is affine if and only if there is a lower
triangular matrix $L\in GL(n+1,\C)$ with positive diagonal entries
and a real diagonal matrix $K$ such that
$$
P\left(t\right)=\Ht{L}e^{tK}L.
$$
Such decomposition is unique.
\end{cor}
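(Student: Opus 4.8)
The plan is to combine Theorem~\ref{CompCheby} with the explicit description of Fubini--Study geodesics and a Cauchy--Binet computation. First, the formula for $c[\varphi_P]$ in Theorem~\ref{CompCheby} can be rewritten as $c[\varphi_P](\alpha)=G(\alpha)-\sum_{i=0}^{n-1}\alpha_i\log\mu_i(P)-\bigl(1-\sum_{i=0}^{n-1}\alpha_i\bigr)\log\mu_n(P)$, where $G$ depends only on $\alpha\in\Delta(X,H,Y_\bullet)$ and not on $P$. Since $1,\alpha_0,\dots,\alpha_{n-1}$ are linearly independent affine functions on the simplex \eqref{simplexEq}, the curve $t\mapsto c[\varphi_{P(t)}]$ is affine if and only if each $t\mapsto\log\mu_i(P(t))$, $i=0,\dots,n$, is affine, i.e.\ $\mu_i(P(t))=a_ie^{tk_i}$ for some $a_i>0$ and $k_i\in\R$. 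One verifies directly from \eqref{defmu} that $\mu_i(P)=|M_{ii}|^2$, where $P=\Ht MM$ is the unique factorization with $M$ lower triangular and positive diagonal; equivalently $\mu_i(P)=\det P_{[i:n]}/\det P_{[i+1:n]}$, a ratio of consecutive trailing principal minors (rows and columns $i,\dots,n$). Telescoping, the condition $\mu_i(P(t))=a_ie^{tk_i}$ for all $i$ is equivalent to: every trailing principal minor $\det P(t)_{[j:n]}$ is a single exponential $b_je^{tm_j}$ with $b_j>0$.

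Next, recall from the solution of the geodesic equation on the subspace of Fubini--Study metrics (used in the proof of Theorem~\ref{CompCheby}) that a geodesic $t\mapsto\varphi_{P(t)}$ has $P(t)=\Ht Ce^{t\Lambda}C$ for some $C\in GL(n+1,\C)$ and real diagonal $\Lambda=\diag(\beta_0,\dots,\beta_n)$. By Cauchy--Binet, $\det P(t)_{[j:n]}=\sum_{|S|=n+1-j}|\Delta_S|^2e^{t\sigma_S}$, where $S$ ranges over subsets of $\{0,\dots,n\}$, $\Delta_S$ is the minor of $C$ on rows $S$ and columns $\{j,\dots,n\}$, and $\sigma_S=\sum_{p\in S}\beta_p$. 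The coefficients $|\Delta_S|^2$ are nonnegative, so no cancellation occurs, and the sum is a single exponential precisely when all $S$ with $\Delta_S\neq0$ have the same $\sigma_S$. Reading this off the Plücker coordinates: $c[\varphi_{P(t)}]$ is affine if and only if, for every $m$, the span $W_m\subseteq\C^{n+1}$ of the last $m$ columns of $C$ is invariant under $e^{t\Lambda}$, i.e.\ $e^{t\Lambda}$ preserves the flag $W_1\subset\dots\subset W_{n+1}=\C^{n+1}$.

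One implication is then immediate. If $C$ is itself a lower triangular matrix $L$ with positive diagonal, then for each $j$ the only nonvanishing minor of $L$ on columns $\{j,\dots,n\}$ is the staircase one on rows $\{j,\dots,n\}$, so the flag condition holds trivially; conversely $P(t)=\Ht Le^{tK}L$ is a Fubini--Study geodesic since $K$ is real diagonal, hence Hermitian. For the other implication, invariance of every $W_m$ under $e^{t\Lambda}$ forces $W_m=\bigoplus_a(W_m\cap V_a)$, where the $V_a$ are the eigenspaces of $\Lambda$, and inside each $V_a$ the subspaces $W_m\cap V_a$ form a complete flag. Choose a block-diagonal unitary $U$ (one unitary block on each $V_a$) adapting these flags to coordinate flags; then every $UW_m$ is a coordinate subspace, so the columns of $UC$ are supported on a nested family of coordinate sets, and a permutation matrix $\Pi$ — which only permutes the entries of $\Lambda$, keeping it real diagonal — brings $\Pi UC$ to lower triangular form, after which a diagonal unitary $D$ corrects the phases along the diagonal. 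Since $U$ and $D$ commute with $e^{t\Lambda}$, one computes $P(t)=\Ht Le^{tK}L$ with $L=D^*\Pi UC$ lower triangular with positive diagonal and $K$ the corresponding permutation of $\Lambda$. Uniqueness is easy: evaluating at $t=0$ gives $P(0)=\Ht LL$, which determines $L$ by uniqueness of the lower-triangular factorization, and then $K$ is determined from $e^{tK}=(\Ht L)^{-1}P(t)L^{-1}$.

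The main obstacle is the ``only if'' direction above, namely passing from the numerical condition (every $\det P(t)_{[j:n]}$ is a single exponential) to the structural conclusion. The bridge is the Cauchy--Binet expansion combined with the nonnegativity of the Gram-type coefficients $|\Delta_S|^2$: this excludes accidental cancellations and converts the condition into invariance of the column flag of $C$ under the one-parameter group $e^{t\Lambda}$, using that a decomposable multivector fixed (up to scalar) by $e^{t\Lambda}$ spans an $e^{t\Lambda}$-invariant subspace — which matters because $\Lambda$ may have repeated eigenvalues. The remaining reduction to triangular form is elementary linear algebra, but it must be organized so that none of the three operations (block unitary, permutation, diagonal unitary) destroys the normal form $\Ht Ce^{t\Lambda}C$ with $\Lambda$ real diagonal.
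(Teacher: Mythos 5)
Your proof is correct, and it reaches the decomposition by a genuinely different route from the paper's. Both arguments start identically: Theorem \ref{CompCheby} reduces affineness of $t\mapsto c[\varphi_{P(t)}]$ to affineness of every $t\mapsto\log\mu_i(P(t))$ (using linear independence of $\alpha_0,\dots,\alpha_{n-1},1-\sum\alpha_i$ on the simplex), and both then feed in the normal form $P(t)=\Ht{A}e^{tD}A$ of Fubini--Study geodesics --- which, a small attribution point, is Proposition \ref{introgeo}, proved in Section \ref{Secgeod} via Bergman geodesics, not an ingredient of the proof of Theorem \ref{CompCheby}. The paper's key step (Lemma \ref{linedecomp}) is an induction on the order of $P$: at each stage only $\mu_n(P(t))=\sum_i|a_{in}|^2e^{d_it}$ is examined, positivity of the coefficients forces the last column of $A$ into a single eigenspace of $D$, a unitary rotation inside that eigenspace normalizes that column to $e_n$, and Claims \ref{mu} and \ref{mured} push the remaining conditions to the upper-left block. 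You instead handle all the trailing minors simultaneously: the Cauchy--Binet (Gram) expansion $\det P(t)_{[j:n]}=\sum_S\lvert\Delta_S\rvert^2e^{t\sigma_S}$ has nonnegative coefficients, so the single-exponential condition is equivalent, via the Pl\"ucker coordinates, to $e^{t\Lambda}$-invariance of the whole column flag of $C$, after which one block unitary, one permutation, and one diagonal phase correction triangularize $C$ in a single pass. The two proofs exploit the same positivity mechanism (a nonnegative combination of exponentials is a single exponential only if all contributing frequencies coincide), but your flag-invariance formulation is more conceptual and makes the geometric content --- that affineness of the Chebyshev potential amounts to the one-parameter group preserving the flag determined by $C$ --- explicit, which resonates with the paper's closing discussion; the paper's induction is more elementary and self-contained. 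Your uniqueness argument, via the Cholesky factorization of $P(0)$ followed by solving for $e^{tK}$, is also slightly cleaner than the paper's triangular-versus-upper-triangular comparison. All the individual steps you use (the decomposition $W_m=\bigoplus_a(W_m\cap V_a)$ for an invariant subspace of a diagonalizable operator, the completeness of the induced flags in each eigenspace, and the commutation of the block unitary and diagonal phases with $e^{t\Lambda}$) check out.
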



Next, we record the following, probably well-known fact, for which
we could not find a reference. It describes geodesics of
Fubini--Study potentials (which, we emphasize, is not quite the same
as the easier task of describing geodesics of Fubini--Study K\"ahler
metrics).

\begin{ptn}\label{introgeo}
The segment $t\mapsto\varphi_{P(t)}\in\calH_L$ is a geodesic if and
only if there is an $A\in GL(n+1,\C)$ and a real diagonal matrix $D$
such that
$$
P\left(t\right)=\Ht{A}e^{tD}A,
$$
where $\varphi_{P(t)}$ is defined in Section \ref{Chebycomp}.
\end{ptn}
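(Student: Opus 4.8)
The plan is to translate the geodesic condition into the language of the homogeneous complex Monge--Amp\`ere equation (HCMA) and then exploit the special structure of Fubini--Study potentials. Recall that, by the Semmes--Donaldson correspondence, $t\mapsto\varphi_{P(t)}$ is a geodesic in $\calH_L$ precisely when the associated function $\Phi$ on $[0,T]\times S^1\times X$, defined by $\Phi(t+\sqrt{-1}s,\cdot):=\varphi_{P(t)}$ (independent of $s$), is the maximal psh solution of $(dd^c\Phi)^{n+1}=0$ subordinate to the boundary data $\varphi_{P(0)},\varphi_{P(T)}$. For Fubini--Study potentials $\varphi_P=\log\Ht{z}Pz$, the key observation is that if we write $P(t)=\Ht{A}e^{tD}A$ for $A\in GL(n+1,\C)$ and $D=\diag(d_0,\ldots,d_n)$ real, then
$$
\varphi_{P(t)}=\log\sum_{j=0}^n e^{td_j}\,\bigl|(Az)_j\bigr|^2,
$$
and one should check this is precisely the restriction to $s=0$ of the psh function $(\tau,z)\mapsto\log\sum_j|e^{(1/2)\tau d_j}(Az)_j|^2$ on the product, where $\tau=t+\sqrt{-1}s$. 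Since each summand $|e^{(1/2)\tau d_j}(Az)_j|^2$ is the modulus-squared of a holomorphic function of $(\tau,z)$, the logarithm of their sum is psh; moreover it is a ``log of sum of squares of holomorphic sections,'' which is exactly the form of a (degenerate) Fubini--Study-type potential on the product, so its $(dd^c)^{n+1}$ vanishes identically because the corresponding map to $\PP^n$ factors through $\PP^n$ (rank reasons: an $(n{+}1)$-dimensional domain mapping to an $n$-dimensional projective space pulls back the Fubini--Study form to a form of rank $\le n$). This shows sufficiency, modulo checking the boundary values match and the maximality/uniqueness of the solution.

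For the converse, I would argue as follows. Given a geodesic $t\mapsto\varphi_{P(t)}$, the curve $P(t)$ of positive definite Hermitian matrices is determined (up to the ambiguity in the exponential representation) by its endpoints; so it suffices to show that for \emph{any} two positive definite Hermitian matrices $P_0,P_1$ there exist $A\in GL(n+1,\C)$ and real diagonal $D$ with $P_0=\Ht{A}A$ and $P_1=\Ht{A}e^D A$, and then invoke uniqueness of the geodesic (the HCMA solution with given boundary data is unique among bounded psh functions, by Bedford--Taylor comparison) together with the sufficiency direction to conclude $P(t)=\Ht{A}e^{tD}A$ on all of $[0,T]$. The existence of such a simultaneous decomposition is the classical fact that a pair of positive definite Hermitian forms can be simultaneously diagonalized: set $A_0$ with $P_0=\Ht{A_0}A_0$, so $\Ht{(A_0^{-1})}P_1A_0^{-1}$ is positive definite Hermitian, hence equals $\Ht{U}e^D U$ for a unitary $U$ and real diagonal $D$; then $A:=UA_0$ works. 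Thus the only content is matching this algebra to the analytic geodesic via uniqueness.

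The steps in order are: (i) record the Semmes--Donaldson characterization of geodesics as $S^1$-invariant maximal psh solutions of the HCMA on $[0,T]\times S^1\times\PP^n$; (ii) for $P(t)=\Ht{A}e^{tD}A$, exhibit the explicit psh extension $\Phi(\tau,z)=\log\sum_j|e^{(\tau/2)d_j}(Az)_j|^2$ and check it is $S^1$-invariant with the correct endpoint potentials; (iii) verify $(dd^c\Phi)^{n+1}=0$ by the rank argument (pullback of $\omega_{FS}$ under a map to $\PP^n$); (iv) invoke Bedford--Taylor uniqueness to conclude this $\Phi$ is \emph{the} geodesic, proving sufficiency; (v) for necessity, use simultaneous diagonalization of $P(0),P(T)$ to produce $A,D$, apply (ii)--(iv) to that data, and conclude by uniqueness that the given geodesic coincides with $\Ht{A}e^{tD}A$. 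The main obstacle I anticipate is step (iii): being careful that the rank-deficiency argument is valid even where $\Phi$ is not smooth, i.e., that $(dd^c\Phi)^{n+1}=0$ holds as Bedford--Taylor measures and not merely on the locus where the section-map is an immersion; one handles this by writing $\Phi$ as a decreasing limit (or using the regularization/homogeneity of $\log$ of sums of squares) so that the degenerate Monge--Amp\`ere mass is controlled, or alternatively by noting $\Phi$ is locally the pullback of a bounded psh function under a holomorphic submersion-type map to a lower-dimensional space, for which the top Monge--Amp\`ere power vanishes by Bedford--Taylor's functoriality. A secondary point to get right is the bookkeeping of the factor $\tfrac12$ between $\varphi_P=\log\Ht{z}Pz$ and the ``sum of squares of holomorphic sections'' normalization, so that the endpoint data in step (ii) genuinely equal $\varphi_{P(0)}$ and $\varphi_{P(T)}$.
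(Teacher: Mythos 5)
Your proposal is correct, but the sufficiency direction takes a genuinely different route from the paper's. The paper proves that $t\mapsto\varphi_{\Ht{A}e^{tD}A}$ is a geodesic by quantization: it computes the Bergman geodesics $\varphi_m$ of Definition \ref{Bgdef} explicitly for diagonal data (Lemma \ref{geodexm}), observes that $\varphi_m=\varphi_{e^{tD}}$ up to an additive constant that tends to zero, and invokes the Phong--Sturm/Berndtsson convergence theorem (Theorem \ref{Bg}), then reduces the general case to the diagonal one by the change of coordinates $W=AZ$ (Remark \ref{cob}). You instead exhibit the explicit $\im\tau$-invariant extension $\Phi(\tau,z)=\log\sum_j|e^{(\tau/2)d_j}(Az)_j|^2$ and kill $(dd^c\Phi)^{n+1}$ by the rank argument: $dd^c\Phi$ is the pullback of $\omega_{FS}$ under a holomorphic map from an $(n+1)$-dimensional domain to $\PP^n$. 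This is more elementary---it needs only uniqueness of the continuous solution to the Dirichlet problem for the HCMA, not the convergence of Bergman geodesics---and your anticipated difficulty in step (iii) in fact evaporates: since $A$ is invertible the linear forms $(AZ)_j$ have no common zero, so $\sum_j|e^{(\tau/2)d_j}(Az)_j|^2>0$ everywhere, $\Phi$ is smooth, and no Bedford--Taylor regularization is needed; the identity $(F^*\omega_{FS})^{n+1}=F^*(\omega_{FS}^{n+1})=0$ holds classically. The necessity direction is identical in both arguments (simultaneous diagonalization of $P(0),P(1)$ plus uniqueness of the geodesic), though for completeness one should note that equality of the potentials $\varphi_{P(t)}=\varphi_{\Ht{A}e^{tD}A}$ forces equality of the matrices, since a Hermitian form is determined by its values $\Ht{z}Pz$. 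What the paper's route buys is the amusing exact formula for the Bergman approximants and consistency with the quantization theme of the article; what yours buys is independence from that machinery.
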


Combining the previous two results, we obtain explicit
counterexamples to Conjecture \ref{conj}.

\begin{cor}
Consider the hyperplane bundle $H\to\PP^n$. There exists a geodesic
$t\mapsto\varphi(t,\,\cdot\,)\in\calH_H$, such that $t\mapsto
c[\varphi(t,\,\cdot\,)]$ is not affine.
\end{cor}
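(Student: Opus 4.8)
The plan is to combine Proposition~\ref{introgeo} with Corollary~\ref{introaff} and exhibit a single explicit matrix path that is a geodesic but whose Chebyshev potential fails to be affine. By Proposition~\ref{introgeo}, any choice of $A\in GL(n+1,\C)$ and real diagonal $D$ produces a geodesic $t\mapsto\varphi_{P(t)}$ via $P(t)=\Ht{A}e^{tD}A$. By Corollary~\ref{introaff}, $t\mapsto c[\varphi_{P(t)}]$ is affine precisely when $P(t)$ admits a \emph{different} representation, namely $P(t)=\Ht{L}e^{tK}L$ with $L$ lower triangular with positive diagonal and $K$ real diagonal, and this representation, when it exists, is unique. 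So the task reduces to choosing $A$ and $D$ for which no such $(L,K)$ can exist.

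First I would reduce to the smallest nontrivial case $n=1$, i.e.\ the hyperplane bundle $H\to\PP^1$, since a counterexample there embeds into $\PP^n$ by acting trivially on the extra coordinates (or one can simply note that the $n=1$ construction already gives a geodesic in $\calH_H$ violating the conjecture, and that is all that is claimed). For $n=1$, pick $D=\diag(1,-1)$ and let $A$ be a unitary rotation that is \emph{not} lower triangular, e.g.\ $A=\frac{1}{\sqrt2}\begin{pmatrix}1&1\\-1&1\end{pmatrix}$. Then $P(t)=\Ht{A}e^{tD}A$ is a curve of positive definite Hermitian matrices, hence a genuine Fubini--Study geodesic by Proposition~\ref{introgeo}. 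The key step is to show that this $P(t)$ is \emph{not} of the form $\Ht{L}e^{tK}L$. Here I would argue by uniqueness: if such a decomposition existed, then at $t=0$ we would have $P(0)=\Ht{L}L=\Ht{A}A=I$, forcing $L=I$ (the only lower-triangular matrix with positive diagonal whose Gram matrix is the identity), and then differentiating at $t=0$ gives $\dot P(0)=\Ht{L}K L=K$, which must be real diagonal; but $\dot P(0)=\Ht{A}DA$ is explicitly the traceless symmetric matrix $\begin{pmatrix}0&-1\\-1&0\end{pmatrix}$ (or similar, depending on the chosen $A$), which is not diagonal---contradiction. Hence $t\mapsto c[\varphi_{P(t)}]$ is not affine. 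Alternatively, and perhaps more transparently, one can just plug $P(t)$ into the formula of Theorem~\ref{CompCheby}: the eigenvalue quantities $\mu_i(P(t))$ appearing there are computed from the leading principal minors of $P(t)$, and with the above $A$, $D$ one checks that $t\mapsto c[\varphi_{P(t)}](\alpha)$ involves expressions like $\log(a e^{t}+b e^{-t})$ with $a,b>0$ both nonzero, which is strictly convex and certainly not affine in $t$.

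I would then close by remarking that the same recipe---take any unitary or invertible $A$ not expressible through the lower-triangular normal form compatible with $D$---produces a whole family of counterexamples on every $\PP^n$, $n\geq1$, and that Corollary~\ref{introaff} gives the precise dividing line. The main obstacle is purely bookkeeping: making sure the chosen $A$ genuinely evades the canonical form of Corollary~\ref{introaff} for \emph{all} $t$ simultaneously (not just at $t=0$), which is why invoking the stated uniqueness of the decomposition, together with the $t=0$ and first-order data, is the cleanest route; the explicit computation via Theorem~\ref{CompCheby} then serves as an independent confirmation.
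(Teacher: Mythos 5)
Your construction is correct and in fact picks out exactly the example the paper uses: for $n=1$, taking $A=\frac{1}{\sqrt2}\bigl(\begin{smallmatrix}1&1\\-1&1\end{smallmatrix}\bigr)$ and $D=\diag(1,-1)$ gives $P(t)=\Ht{A}e^{tD}A=\bigl(\begin{smallmatrix}\cosh t&\sinh t\\ \sinh t&\cosh t\end{smallmatrix}\bigr)$, a geodesic by Proposition \ref{introgeo}. Where you differ is the last step. The paper computes $\mu_0(P(t))=1/\cosh t$ and $\mu_1(P(t))=\cosh t$ directly from Definition \ref{defetamu} and reads off non-affineness from the explicit formula of Theorem \ref{CompCheby}; your primary argument instead invokes Corollary \ref{introaff} and rules out any decomposition $P(t)=\Ht{L}e^{tK}L$ by using $P(0)=I$ (which forces $L=I$ by uniqueness of the Cholesky factorization) together with $\dot P(0)=\Ht{A}DA$ being non-diagonal. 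That route is valid and avoids computing the $\mu_i$ altogether, but it leans on the full strength of Corollary \ref{introaff} (hence on the inductive decomposition of Lemma \ref{linedecomp}), whereas the paper's two-line minor computation needs only Theorem \ref{CompCheby}; your ``alternative'' paragraph is precisely the paper's proof. Two small corrections: the quantities $\mu_i$ are ratios of the \emph{lower-right} (trailing) minors $\det_i$ of (\ref{defeta}), not leading principal minors --- immaterial here since $P(t)$ is symmetric with equal diagonal entries --- and with your choice of $A$ one gets $\dot P(0)=\bigl(\begin{smallmatrix}0&1\\1&0\end{smallmatrix}\bigr)$ rather than $\bigl(\begin{smallmatrix}0&-1\\-1&0\end{smallmatrix}\bigr)$, which of course does not affect the contradiction.
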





\begin{proof}
Let $n=1$. That is, $X=\PP^1$. Consider
$$
P\left(t\right):=\begin{pmatrix}
\cosh t&\sinh t\\
\sinh t&\cosh t
\end{pmatrix}=\begin{pmatrix}
\frac{\sqrt{2}}{2}&-\frac{\sqrt{2}}{2}\\
\frac{\sqrt{2}}{2}&\frac{\sqrt{2}}{2}
\end{pmatrix}\begin{pmatrix}
e^t\\
&e^{-t}
\end{pmatrix}\begin{pmatrix}
\frac{\sqrt{2}}{2}&\frac{\sqrt{2}}{2}\\
-\frac{\sqrt{2}}{2}&\frac{\sqrt{2}}{2}
\end{pmatrix}.
$$
By Proposition \ref{introgeo}, $\varphi_{P(t)}$ is a geodesic.
However, by (\ref{defmu}),
$$\begin{aligned}
\mu_0\left(P\left(t\right)\right)&=\frac{1}{\cosh
t},&\mu_1\left(P\left(t\right)\right)&=\cosh t.
\end{aligned}$$
Therefore by Theorem \ref{CompCheby}, $t\mapsto c[\varphi_{P(t)}]$
is not affine.
\end{proof}

\subsection*{Organization}

In Section \ref{PreCheby}, we recall definitions of the Okounkov
body and the Chebyshev potential. As an application, in Section
\ref{Chebycomp} we compute the Chebyshev potentials of Fubini--Study
metrics and prove Theorem \ref{CompCheby}. In Section \ref{Secgeod},
we give the definition of a geodesic. Then we use quantization to
find geodesics in the space of Fubini--Study metrics and prove
Proposition \ref{introgeo}. In Section \ref{Seccount}, we classify
all geodesics whose Chebyshev potentials are affine in $t$, and
prove Corollary \ref{introaff}.

\section{Chebyshev potentials of Fubini--Study potentials}

\subsection{Background on Chebyshev potentials}\label{PreCheby}


This section gives some background on Chebyshev potentials.

First recall the definition of the Okounkov body of a big line
bundle with respect to an admissible flag \cite[Section 1]{LM}.


\begin{dfn}
Let $X^n$ be a projective complex manifold. A flag
$Y_\bullet:X=Y_0\supseteq\cdots\supseteq Y_n=\{p\}$ is
\textit{admissible} if each $Y_i$ is a submanifold of codimension
$i$.
\end{dfn}

\begin{dfn}\label{Okb}
Let $Y_\bullet:X=Y_0\supseteq\cdots\supseteq Y_n=\{p\}$ be an
admissible flag on $X$, and $L\to X$ a big line bundle. Choose a
local frame $f:U\to D^n\subseteq\C^n$ around $p$ such that
$$
f\left(Y_i\cap U\right)=\left\{z\in
D^n\,\middle|\,z_1=\cdots=z_i=0\right\},
$$
and fix a local trivialization of $L|_U$. A section $s\in H^0(X,mL)$
has a Taylor expansion
$$
s=\sum_\alpha k_\alpha z^\alpha
$$
on $U$. If $s\neq0$, define
$$
\nu\left(s\right):=\min\left\{\alpha\,\middle|\,k_\alpha\neq0\right\}\in\N^n,
$$
with respect to the lexicographic order, i.e., $\alpha<\beta$ if
$\alpha_i<\beta_i$ for some $i$ and $\alpha_j=\beta_j$ for $j<i$.
Let
$$
\Delta_m\left(L\right):=\left\{\nu\left(s\right)\,\middle|\,s\in
H^0\left(X,mL\right)\setminus\left\{0\right\}\right\}\subseteq\N^n.
$$
Define the \textit{Okounkov body} associated to $(X,Y_\bullet,L)$
$$
\Delta\left(X,Y_\bullet,L\right)=\Delta\left(L\right):=\ol{\bigcup_m\frac{1}{m}\Delta_m\left(L\right)}\subseteq\R^n.
$$
\end{dfn}

The following proposition is proved in \cite[Lemma 1.4]{LM}.
\begin{ptn}\label{dimH0}
The set $\Delta_m(L)$ has exactly $\dim H^0(X,mL)$ points.
\end{ptn}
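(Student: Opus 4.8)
The plan is to deduce the statement from the more flexible claim that for \emph{every} linear subspace $V \subseteq H^0(X,mL)$ one has $\#\,\nu(V\setminus\{0\}) = \dim V$; the proposition is then the special case $V = H^0(X,mL)$, since by Definition \ref{Okb} we have $\Delta_m(L) = \nu(H^0(X,mL)\setminus\{0\})$. The underlying mechanism is that the ``leading exponent'' map $\nu$ behaves like a valuation with values in the ordered semigroup $(\N^n,<)$, so that it transports a basis of $V$ to $\dim V$ distinct exponents. First I would record two formal properties of $\nu$: it is well-defined on nonzero sections, because $X$ is irreducible so a nonzero $s$ does not vanish identically on the chart $U$, and because the lexicographic order on $\N^n$ is a \emph{well-order}, so the set $\{\alpha : k_\alpha \neq 0\}$ has a least element; and $\nu(cs) = \nu(s)$ for $c \in \C^*$, since rescaling multiplies every Taylor coefficient by $c$.

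Next I would prove the upper bound $\#\,\nu(V\setminus\{0\}) \le \dim V$ by showing that sections with pairwise distinct $\nu$-values are linearly independent. Given $s_1,\dots,s_k$ with distinct values, order them so that $\nu(s_1) < \cdots < \nu(s_k)$, and suppose $\sum_i c_i s_i = 0$ with some $c_i \neq 0$. Let $j$ be the smallest index with $c_j \neq 0$ and inspect the coefficient of $z^{\nu(s_j)}$ in the Taylor expansion of $\sum_i c_i s_i$ on $U$. For $i < j$ we have $c_i = 0$; for $i > j$ the exponent $\nu(s_j)$ is strictly smaller than $\nu(s_i) = \min\{\alpha : k^{(i)}_\alpha \neq 0\}$, so $s_i$ contributes $0$ to that coefficient; hence the coefficient equals $c_j$ times the (nonzero) leading coefficient $k^{(j)}_{\nu(s_j)}$ of $s_j$, contradicting $\sum_i c_i s_i = 0$. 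Thus any set of distinct values has size at most $\dim V$.

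Then I would establish the lower bound $\#\,\nu(V\setminus\{0\}) \ge \dim V$ by induction on $d = \dim V$, the case $d=0$ being vacuous. For $d \ge 1$, pick $v \in V\setminus\{0\}$ whose value $\beta := \nu(v)$ is least in $\nu(V\setminus\{0\})$, which exists since $\N^n$ is well-ordered. Let $\ell \colon V \to \C$ send a section to the coefficient of $z^{\beta}$ in its Taylor expansion on $U$; this is linear, and $\ell(v) \neq 0$, so $V' := \ker \ell$ has dimension $d-1$. For nonzero $w \in V'$ the $z^{\beta}$-coefficient vanishes, so $\nu(w) \neq \beta$, and $\nu(w) \ge \beta$ by minimality, hence $\nu(w) > \beta$; therefore $\nu(V\setminus\{0\}) \supseteq \{\beta\} \sqcup \nu(V'\setminus\{0\})$, and the inductive hypothesis applied to $V'$ gives at least $1 + (d-1) = d$ distinct values. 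Combining the two bounds with $V = H^0(X,mL)$ yields the proposition.

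I do not expect a genuine obstacle here — this is the classical Lazarsfeld--Musta\c{t}\u{a} argument \cite[Lemma 1.4]{LM} — but the point that needs the most care is that the lexicographic order on $\N^n$ really is a well-order, since both ``minima'' above (the one defining $\nu$ and the choice of $\beta$) rely on it, and it is also precisely this feature of the order that makes the distinctness-implies-independence step in the upper bound work.
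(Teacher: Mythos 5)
Your proof is correct. The paper does not prove this proposition itself but simply cites \cite[Lemma 1.4]{LM}, and your argument --- distinct leading exponents force linear independence (upper bound), plus induction on $\dim V$ via the kernel of the leading-coefficient functional at the minimal value (lower bound), with the well-ordering of $(\N^n,<)$ correctly flagged as the point making both minima and the independence step work --- is precisely the standard Lazarsfeld--Musta\c{t}\u{a} argument being cited.
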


\begin{exm}\label{exm}
Consider $X=\PP^n$ with the admissible flag
$\PP^n=Y_0\supseteq\cdots\supseteq Y_n$ where
$Y_i=V(Z_0,\ldots,Z_{i-1})$. The local frame around
$p=[0:\cdots:0:1]$ can be chosen to be the standard open set
$U_n=\{Z_n\neq0\}$ with the standard coordinate chart
$z_i=\frac{Z_i}{Z_n}$, where $0\leq i\leq n-1$. Choose the standard
local trivialization of the hyperplane bundle $H$ over $U_n$, i.e.,
a map $H|_{U_n}\to U_n\times\C$ such that for a section $s=z_i$,
$f(s(z_0,\ldots,z_{n-1}))=z_i$. The sections of the line bundle $mH$
are polynomials in $z_0,\ldots,z_{n-1}$ of degree at most $m$. Hence
$$
\Delta_m\left(H\right)=\left\{\left(\alpha_0,\ldots,\alpha_{n-1}\right)\in\N^n\,\middle|\,\sum_{i=0}^{n-1}\alpha_i\leq
m\right\},
$$
and the Okounkov body is the simplex
\begin{equation}
\label{simplexEq}
\begin{aligned}
\Delta\left(H\right)&=\ol{\left\{\left(\alpha_0,\ldots,\alpha_{n-1}\right)\in\Q_{\geqslant0}^n\,\middle|\,\sum_{i=0}^{n-1}\alpha_i\leq1\right\}}\\
&=\left\{\left(\alpha_0,\ldots,\alpha_{n-1}\right)\in\R_{\geqslant0}^n\,\middle|\,\sum_{i=0}^{n-1}\alpha_i\leq1\right\}.
\end{aligned}
\end{equation}
\end{exm}

The definition of the Chebyshev potential involves certain sections
called the Chebyshev sections, as introduced by Witt Nystr\"om
\cite[Sections 5 and 7]{Wit}.
\begin{dfn}
Notation as in Definition \ref{Okb}. The \textit{leading term} of a
section $\displaystyle s=\sum_{\alpha\geq\nu(s)}k_\alpha z^\alpha$
is defined to be
$$
\ell(s):=k_{\nu\left(s\right)}z^{\nu\left(s\right)}.
$$
For $\alpha\in\frac{1}{m}\Delta_m(L)$, define an affine space
$$
A_{m,\alpha}:=\left\{s\in
H^0\left(X,mL\right)\,\middle|\,\ell\left(s\right)=z^{m\alpha}\right\}.
$$
\end{dfn}
\begin{dfn}\label{Chebysec}
Notation as in Definition \ref{Okb}. Let $\varphi$ be a continuous
potential on $L$, an $\mu$ a smooth volume form on $X$. The linear
space $H^0(X,mL)$ admints a Hermitian inner product
$$
Hilb_m\left(\varphi,\mu\right)\left(s,t\right):=\int_X
s\ol{t}e^{-\varphi}d\mu.
$$
Each affine subspace $A_{m,\alpha}$ has a unique minimizer
$Ch_{m,\alpha}$ of the norm function \cite[Corollary 5.4]{Bre}. Such
a minimizer is called a \textit{Chebyshev section} for
$(X,Y_\bullet,L,\varphi,\mu,\alpha,m)$.
\end{dfn}

\begin{rmk}
The definition of $A_{m,\alpha}$ depends on the choice of the chart.
Indeed, monic sections are not preserved by the scaling of the
chart.
\end{rmk}

The following definition of the Chebyshev potential can be found in
\cite[Definition 5.5]{Wit}.
\begin{dfn}\label{Chebydef}
Notation as in Definition \ref{Okb}. Fix a smooth volume form $\mu$
and let $Ch_{m,\alpha}$ denote the Chebyshev sections as in
Definition \ref{Chebysec}. Given a point $\alpha\in\Int\Delta(L)$,
pick a sequence $\alpha_{m_k}\in\frac{1}{m_k}\Delta_{m_k}(L)$ that
converges to $\alpha$, where $m_k$ is any strictly increasing
sequence of natural numbers. Define the \textit{Chebyshev potential}
$$
c\left[\varphi\right]\left(\alpha\right):=\lim_{k\to\infty}\frac{1}{m_k}\log\left\|Ch_{m_k,\alpha_{m_k}}\right\|_{\varphi,\mu}^2.
$$
This limit always exists, and is independent of the choice of
$\alpha_{m_k}$ and $\mu$ \cite[Proposition 7.3]{Wit}.
\end{dfn}

The following is from \cite[Proposition 7.3]{Wit}.
\begin{ptn}\label{cvx}
The Chebyshev potential is convex on $\Delta(X,Y_\bullet,L)$.
\end{ptn}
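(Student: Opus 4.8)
The plan is to derive convexity from the \emph{exact multiplicativity} of pointwise metric norms under multiplication of sections. For $s\in H^0(X,mL)$ write $|s|_\varphi$ for its pointwise norm induced by the metric $e^{-\varphi}$; the crucial feature is the identity $|st|_\varphi=|s|_\varphi\,|t|_\varphi$ for $s\in H^0(X,mL)$ and $t\in H^0(X,m'L)$, so that the sup-norm $\|s\|_{\varphi,\infty}:=\sup_X|s|_\varphi$ is submultiplicative. First I would record the algebraic compatibility of leading terms with products. If $\ell(s)=z^{m\alpha}$ and $\ell(t)=z^{m'\alpha'}$ then $\nu(st)=\nu(s)+\nu(t)=m\alpha+m'\alpha'$ and $\ell(st)=z^{m\alpha+m'\alpha'}$ is again monic, whence, setting
$$
\beta:=\frac{m\alpha+m'\alpha'}{m+m'}\in\frac{1}{m+m'}\N^n,
$$
we have $st\in A_{m+m',\beta}$. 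This is the mechanism that will turn a discrete average of the two indices $\alpha,\alpha'$ into a genuine convex combination after passing to large $m$.

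Next I would introduce the sup-norm Chebyshev transform $T_m(\alpha):=\frac{1}{m}\log\inf_{s\in A_{m,\alpha}}\|s\|_{\varphi,\infty}^2$ and prove the subadditivity estimate
$$
(m+m')\,T_{m+m'}(\beta)\leq m\,T_m(\alpha)+m'\,T_{m'}(\alpha').
$$
Indeed, taking near-minimizers $s\in A_{m,\alpha}$ and $t\in A_{m',\alpha'}$, their product lies in $A_{m+m',\beta}$ by the previous paragraph, and submultiplicativity of $\|\cdot\|_{\varphi,\infty}$ gives $\|st\|_{\varphi,\infty}^2\leq\|s\|_{\varphi,\infty}^2\,\|t\|_{\varphi,\infty}^2$, from which the displayed inequality follows upon taking logarithms. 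A Fekete-type argument on the cone over $\Delta(L)$ then yields both the existence of $G(\alpha):=\lim_m T_m(\alpha)$ and its convexity: for $\alpha,\alpha'\in\Int\Delta(L)$ and rational $\lambda$, choose integers $a,b$ with $\frac{am}{am+bm'}\to\lambda$, apply the subadditivity inequality to the tensor products $s^{\otimes a}t^{\otimes b}$ whose index is the weighted average $\frac{am\alpha+bm'\alpha'}{am+bm'}\to\lambda\alpha+(1-\lambda)\alpha'$, and pass to the limit to obtain $G(\lambda\alpha+(1-\lambda)\alpha')\leq\lambda G(\alpha)+(1-\lambda)G(\alpha')$. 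Convexity for all convex combinations follows from the rational case together with the local boundedness and continuity of $G$ on the open set $\Int\Delta(L)$.

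Finally I would identify $G$ with the Chebyshev potential $c[\varphi]$ defined through the $L^2$-minimizers $Ch_{m,\alpha}$. On the one hand $\|Ch_{m,\alpha}\|_{\varphi,\mu}^2\leq\mu(X)\,\|Ch_{m,\alpha}^\infty\|_{\varphi,\infty}^2$ by the trivial bound $\int_X|s|_\varphi^2\,d\mu\leq\mu(X)\sup_X|s|_\varphi^2$, applied to the sup-norm minimizer. On the other hand a Bernstein--Markov inequality for the measure $\mu$ provides the reverse comparison $\|s\|_{\varphi,\infty}^2\leq e^{m\varepsilon_m}\|s\|_{\varphi,\mu}^2$ with $\varepsilon_m\to0$, so that the two transforms have the same exponential growth rate and $c[\varphi](\alpha)=\lim_m\frac{1}{m}\log\|Ch_{m,\alpha}\|_{\varphi,\mu}^2=G(\alpha)$. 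Since $G$ is convex, so is $c[\varphi]$, and the independence of $\mu$ is a byproduct of the comparison. The main obstacle I anticipate is the uniform, sub-exponential control $\varepsilon_m\to0$ in the Bernstein--Markov step, which is what licenses replacing the $L^2$ norm by the sup-norm where submultiplicativity is exact; a secondary bookkeeping difficulty is approximating an arbitrary convex combination in $\Int\Delta(L)$ by the lattice averages $\frac{am\alpha+bm'\alpha'}{am+bm'}$ while keeping the indices inside $\frac{1}{m}\Delta_m(L)$.
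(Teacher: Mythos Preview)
The paper does not prove this proposition; it merely records it as a citation of \cite[Proposition 7.3]{Wit}. Your sketch is essentially the argument Witt Nystr\"om gives there: multiplicativity of the valuation $\nu$ and of the pointwise metric norms yields $st\in A_{m+m',\beta}$ and submultiplicativity of sup-norms, hence the subadditivity inequality for the discrete transforms; a Fekete-type limit on the graded semigroup $\{(m,m\alpha):m\alpha\in\Delta_m(L)\}$ produces a convex limit function; and a Bernstein--Markov comparison (valid for any smooth volume form on a projective manifold with a continuous psh weight on an ample bundle) identifies the sup-norm limit with the $L^2$-defined $c[\varphi]$ and simultaneously gives independence of $\mu$. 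So your route is the same as the one the paper is invoking by reference.

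One small point of care: you appeal to continuity of $G$ on $\Int\Delta(L)$ to pass from rational to arbitrary convex combinations, but continuity is usually \emph{deduced from} convexity rather than established beforehand. The clean way around this is to first prove midpoint (or rational) convexity from the subadditivity, then combine it with a two-sided local bound on $G$ (upper bound from any competitor section, lower bound from the Bernstein--Markov inequality and a fixed reference metric) to conclude full convexity via the standard measurable-midpoint-convex $\Rightarrow$ convex argument. This is exactly how Witt Nystr\"om organizes it, and your sketch is compatible with that ordering once this is made explicit.
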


\subsection{A Gram--Schmidt
criterion for Chebyshev sections}

\begin{ptn}\label{orthoChebysec}
For each $m$, the collection of all the Chebyshev sections
$\{Ch_{m,\alpha}\}_{\alpha\in\frac{1}{m}\Delta_m(L)}$ forms an
orthogonal basis for $H^0(X,mL)$. On the other hand, if
$\{s_\alpha\}_{\alpha\in\frac{1}{m}\Delta_m(L)}$ is an orthogonal
basis for $H^0(X,mL)$ with $\ell(s_\alpha)=z^{m\alpha}$, then
$s_\alpha$ is the Chebyshev section in $A_{m,\alpha}$.
\end{ptn}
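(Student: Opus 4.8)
The plan is to reduce both assertions to the variational (orthogonal‑projection) characterization of the minimizer $Ch_{m,\alpha}$, combined with the compatibility of the lexicographic valuation $\nu$ with the linear structure of $H^0(X,mL)$; phrased differently, the proposition says precisely that the Chebyshev sections are the Gram--Schmidt orthogonalization of the lexicographic filtration, which is why the section is titled as it is. First I would record the elementary fact that $W_\alpha:=\{s\in H^0(X,mL): s=0\text{ or }\nu(s)>m\alpha\}$ is a linear subspace: if $\nu(w_1),\nu(w_2)>m\alpha$ then no monomial $z^\gamma$ with $\gamma\leqslant m\alpha$ occurs in either summand, so $\nu(w_1+w_2)>m\alpha$ or $w_1+w_2=0$. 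From this one gets $A_{m,\alpha}=s_0+W_\alpha$ for any $s_0\in A_{m,\alpha}$, since two sections with the same leading term $z^{m\alpha}$ differ by an element of $W_\alpha$, and adding an element of $W_\alpha$ leaves the leading term unchanged. By the Hilbert-space projection theorem — the existence and uniqueness of the minimizer being exactly \cite[Corollary 5.4]{Bre} — the section $Ch_{m,\alpha}$ is then characterized inside $A_{m,\alpha}$ by the orthogonality relation $Ch_{m,\alpha}\perp W_\alpha$ with respect to $Hilb_m(\varphi,\mu)$.

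For the first assertion, I would take $\alpha\neq\beta$ in $\tfrac1m\Delta_m(L)$; since $m\alpha$ and $m\beta$ are distinct elements of $\N^n$, say $m\alpha<m\beta$. Then $\nu(Ch_{m,\beta})=m\beta>m\alpha$ shows $Ch_{m,\beta}\in W_\alpha$, hence $Hilb_m(\varphi,\mu)(Ch_{m,\alpha},Ch_{m,\beta})=0$. Thus the family is pairwise orthogonal; each $Ch_{m,\alpha}$ is nonzero because $\ell(Ch_{m,\alpha})=z^{m\alpha}\neq0$, so the family is linearly independent, and since it has $|\Delta_m(L)|=\dim H^0(X,mL)$ members by Proposition \ref{dimH0}, it is an orthogonal basis.

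For the converse, suppose $\{s_\alpha\}$ is an orthogonal basis with $\ell(s_\alpha)=z^{m\alpha}$. The key lemma is that for any $u=\sum_\beta c_\beta s_\beta\neq0$ one has $\nu(u)=\min\{m\beta: c_\beta\neq0\}$ in the lexicographic order: writing $\gamma_0$ for that minimum, any $s_\beta$ with $c_\beta\neq0$ contributes nothing to a monomial $z^\gamma$ with $\gamma<\gamma_0$ (as $\gamma<\gamma_0\leqslant m\beta=\nu(s_\beta)$), while to $z^{\gamma_0}$ the unique $\beta$ with $m\beta=\gamma_0$ contributes its leading coefficient $1$ and all other surviving terms contribute $0$. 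Granting this, fix $\alpha$ and any $t\in A_{m,\alpha}$; since $t$ and $s_\alpha$ share the leading term $z^{m\alpha}$, either $t=s_\alpha$ or $\nu(t-s_\alpha)>m\alpha$, and in the latter case the lemma forces the $s_\beta$-coefficient of $t-s_\alpha$ to vanish whenever $m\beta\leqslant m\alpha$, in particular for $\beta=\alpha$. Hence $t-s_\alpha\perp s_\alpha$, so $\|t\|_{\varphi,\mu}^2=\|s_\alpha\|_{\varphi,\mu}^2+\|t-s_\alpha\|_{\varphi,\mu}^2\geqslant\|s_\alpha\|_{\varphi,\mu}^2$ with equality iff $t=s_\alpha$; therefore $s_\alpha$ is the unique norm minimizer on $A_{m,\alpha}$, i.e., $s_\alpha=Ch_{m,\alpha}$.

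The only point that requires genuine care — and the one I would dwell on — is the interaction between the lexicographic order and linear combinations, namely that leading monomials of sections with distinct $\nu$-values cannot cancel. This single observation is what makes $W_\alpha$ a subspace, identifies $A_{m,\alpha}$ as the coset $s_0+W_\alpha$, and powers the key lemma in the converse. Everything else is the standard orthogonal-projection argument together with the dimension count of Proposition \ref{dimH0}; no analysis or geometry is needed beyond the existence of the minimizer already quoted.
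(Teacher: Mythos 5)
Your proof is correct and follows essentially the same route as the paper: orthogonality of the $Ch_{m,\alpha}$ via minimality plus the Pythagorean identity, the dimension count from Proposition \ref{dimH0}, and the observation that leading terms of sections with distinct valuations cannot cancel. The only cosmetic difference is that you package the argument through the coset description $A_{m,\alpha}=s_0+W_\alpha$ and the projection-theorem characterization $Ch_{m,\alpha}\perp W_\alpha$ (and, in the converse, you show directly that $s_\alpha$ minimizes the norm rather than expanding $Ch_{m,\alpha}$ in the basis $\{s_\beta\}$ and killing the higher-order coefficients), which is the same computation run in the opposite direction.
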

\begin{proof}
Pick distinct $\alpha_1,\alpha_2\in\frac{1}{m}\Delta_m(L)$. One may
assume $\alpha_1<\alpha_2$ (recall Definition \ref{Okb}), so
$Ch_{m,\alpha_2}$ has higher order than $\alpha_1$, and
$$
Ch_{m,\alpha_1}-\frac{Hilb_m\left(\varphi,\mu\right)\left(Ch_{m,\alpha_1},Ch_{m,\alpha_2}\right)}{\left\|Ch_{m,\alpha_2}\right\|_{\varphi,\mu}^2}Ch_{m,\alpha_2}\in
A_{m,\alpha_1}.
$$
Since $Ch_{m,\alpha_1}$ is a Chebyshev section,
$$\begin{aligned}
\left\|Ch_{m,\alpha_1}\right\|_{\varphi,\mu}^2&\leq\left\|Ch_{m,\alpha_1}-\frac{Hilb_m\left(\varphi,\mu\right)\left(Ch_{m,\alpha_1},Ch_{m,\alpha_2}\right)}{\left\|Ch_{m,\alpha_2}\right\|_{\varphi,\mu}^2}Ch_{m,\alpha_2}\right\|_{\varphi,\mu}^2\\
&=\left\|Ch_{m,\alpha_1}\right\|_{\varphi,\mu}^2-\frac{\left|Hilb_m\left(\varphi,\mu\right)\left(Ch_{m,\alpha_1},Ch_{m,\alpha_2}\right)\right|^2}{\left\|Ch_{m,\alpha_2}\right\|_{\varphi,\mu}^2}.
\end{aligned}$$
This shows that $Ch_{m,\alpha_1}$ and $Ch_{m,\alpha_2}$ are
orthogonal. By Proposition \ref{dimH0}, it follows that
$\{Ch_{m,\alpha}\}_{\alpha\in\frac{1}{m}\Delta_m(L)}$ is an
orthogonal basis for $H^0(X,mL)$.

Let $\{s_\alpha\}_{\alpha\in\frac{1}{m}\Delta_m(L)}$ be an
orthogonal basis for $H^0(X,mL)$, where
$\ell(s_\alpha)=z^{m\alpha}$. For any $\alpha_0$, one can write
$$
Ch_{m,\alpha_0}=\sum_{\alpha\in\frac{1}{m}\Delta_m\left(L\right)}k_\alpha
s_\alpha.
$$
Define
$$
\wt{\alpha}:=\min\{\alpha\,|\,k_\alpha\neq0\}.
$$
Then
$$
z^{m\alpha_0}=\ell\left(Ch_{m,\alpha_0}\right)=\ell\left(\sum_{\alpha\in\frac{1}{m}\Delta_m\left(L\right)}k_\alpha
s_\alpha\right)=k_{\wt{\alpha}}z^{m\wt{\alpha}}.
$$
Therefore $\wt{\alpha}=\alpha_0$ and $k_{\alpha_0}=1$. That is,
\begin{equation}\label{teqs}
Ch_{m,\alpha_0}=s_{\alpha_0}+\sum_{\alpha>\alpha_0}k_\alpha
s_\alpha.
\end{equation}
Since $Ch_{m,\alpha_0}$ is a Chebyshev section,
$$
\left\|Ch_{m,\alpha_0}\right\|_{\varphi,\mu}\leq\left\|s_{\alpha_0}\right\|_{\varphi,\mu}.
$$
On the other hand, $\{s_\alpha\}$ is an orthogonal basis. Thus
$$\begin{aligned}
\left\|Ch_{m,\alpha_0}\right\|_{\varphi,\mu}^2&=\left\|s_{\alpha_0}+\sum_{\alpha>\alpha_0}k_\alpha s_\alpha\right\|_{\varphi,\mu}^2\\
&=\left\|s_{\alpha_0}\right\|_{\varphi,\mu}^2+\sum_{\alpha>\alpha_0}\left|k_\alpha\right|^2\left\|s_\alpha\right\|_{\varphi,\mu}^2\\
&\geqslant\left\|Ch_{m,\alpha_0}\right\|_{\varphi,\mu}^2+\sum_{\alpha>\alpha_0}\left|k_\alpha\right|^2\left\|s_\alpha\right\|_{\varphi,\mu}^2.
\end{aligned}$$
It follows that $k_\alpha=0$ for $\alpha>\alpha_0$ and (\ref{teqs})
becomes $Ch_{m,\alpha_0}=s_{\alpha_0}$.
\end{proof}

\subsection{Computing the Chebyshev sections and potential}\label{Chebycomp}

This section is devoted to the computation of the Chebyshev
potential of Fubini--Study metrics on $\PP^n$. The following setup
from Example \ref{exm} will be used throughout this section.

Fix a set of homogeneous coordinates
$$
[Z_0:\ldots:Z_{n}]
$$
on $\PP^n$ and an admissible flag $\PP^n=Y_0\supseteq\cdots\supseteq
Y_n=\{p\}$ where
$$Y_i:=V(Z_0,\ldots,Z_{i-1}).$$
Choose the standard coordinate chart
$$z_i:=\frac{Z_i}{Z_n}$$
valid on the open set $U_n=\{Z_n\neq0\}$ around $p$, and the
standard local trivialization of the hyperplane bundle $H$ over
$U_n$. By Example \ref{exm}, the Okounkov body is the simplex
$$
\Delta\left(H\right)=\left\{\left(\alpha_0,\ldots,\alpha_{n-1}\right)\in\left[0,+\infty\right)^n\,\middle|\,\sum_{i=0}^{n-1}\alpha_i\leq1\right\}.
$$

Define $\mathcal{P}_n(\C)$ to be the space of positive definite
Hermitian matrices of order $n$. For any
$P\in\mathcal{P}_{n+1}(\C)$, define a Fubini--Study potential
$$
\varphi_P:=\log\Ht{z}Pz,
$$
where $z=(z_0,\ldots,z_{n-1},1)^\mathsf{T}$.

\begin{dfn}\label{defetamu}
Let
$$
P=\begin{pmatrix}
p_{00}&\cdots&p_{0n}\\
\vdots&\ddots&\vdots\\
p_{n0}&\cdots&p_{nn}
\end{pmatrix}
$$
be a positive definite Hermitian matrix. For $0\leq i\leq n$ set
\begin{equation}\label{defeta}
\det\nolimits_i\left(P\right):=\det\begin{pmatrix}
p_{ii}&\cdots&p_{in}\\
\vdots&\ddots&\vdots\\
p_{ni}&\cdots&p_{nn}
\end{pmatrix}.
\end{equation}
For $i>n$ set $\det_i(P):=1$. Since $P$ is positive definite,
$\det_i(P)>0$ for any $i\geq0$ \cite[Chapter X, Theorem 3]{Gan}.
Define
\begin{equation}\label{defmu}
\mu_i\left(P\right):=\frac{\det_i\left(P\right)}{\det_{i+1}\left(P\right)}.
\end{equation}
\end{dfn}

This is an analogue of the $i$-th eigenvalue. In fact, if $P$ is
diagonal, then $\mu_i(P)=p_{ii}$. To compute $\mu_i(P)$ for general
$P$, Claims \ref{mu} and \ref{mured} will be used in Lemma
\ref{linedecomp}.

Recall the notion of a unitriangular matrix \cite[Page 30]{Rot}.
\begin{dfn}
A matrix is \textit{unitriangular} if it is triangular and its
diagonal entries are 1.
\end{dfn}
\begin{clm}\label{mu}
Let $P$ be positive definite Hermitian. For any lower triangular
matrix $L$, $\mu_i(\Ht{L}PL)=\mu_i(P)\cdot|l_{ii}|^2$. In
particular, if $L$ is
unitriangular, then $\mu_i(\Ht{L}PL)=\mu_i(P)$.
\end{clm}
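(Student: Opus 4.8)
The plan is to reduce the statement to a single block-matrix identity for the trailing minors $\det_i$. Fix $i$ with $0\le i\le n$ and split the index set $\{0,\dots,n\}$ into the initial block $\{0,\dots,i-1\}$ and the trailing block $\{i,\dots,n\}$. Writing every matrix in $2\times2$ block form with respect to this splitting, the hypothesis that $L$ is lower triangular gives
$$
L=\begin{pmatrix} L_{11}&0\\ L_{21}&L_{22}\end{pmatrix},\qquad
\Ht{L}=\begin{pmatrix} \Ht{L_{11}}&\Ht{L_{21}}\\ 0&\Ht{L_{22}}\end{pmatrix},\qquad
P=\begin{pmatrix} P_{11}&P_{12}\\ P_{21}&P_{22}\end{pmatrix},
$$
where $L_{22}$ is lower triangular of order $n+1-i$ with diagonal entries $l_{ii},\dots,l_{nn}$, and, by definition, $\det_i(P)=\det P_{22}$. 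The one computation to carry out is the block multiplication $\Ht{L}PL$: because the $(1,2)$ block of $L$ and the $(2,1)$ block of $\Ht{L}$ both vanish, the cross terms drop out and the $(2,2)$ block of $\Ht{L}PL$ equals $\Ht{L_{22}}P_{22}L_{22}$.

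Granting this, I would take determinants. Since $\det L_{22}=\prod_{j=i}^{n}l_{jj}$,
$$
\det\nolimits_i\left(\Ht{L}PL\right)=\left|\det L_{22}\right|^{2}\det\nolimits_i\left(P\right)=\left|\prod_{j=i}^{n}l_{jj}\right|^{2}\det\nolimits_i\left(P\right),
$$
valid for every $0\le i\le n$, and trivially for $i>n$ as well (both sides equal $1$, the product being empty; note the case $i=0$ is the ordinary statement $\det(\Ht{L}PL)=|\det L|^{2}\det P$). Forming the ratio that defines $\mu_i$ and dividing the identity for $i$ by the one for $i+1$, all factors $l_{jj}$ with $j\ge i+1$ cancel, leaving $\mu_i(\Ht{L}PL)=|l_{ii}|^{2}\mu_i(P)$. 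When $L$ is unitriangular every $l_{ii}=1$, so the right-hand side is just $\mu_i(P)$. One should, of course, take $L$ invertible, i.e.\ all $l_{jj}\neq0$, so that $\Ht{L}PL$ is again positive definite and the quantities $\mu_i(\Ht{L}PL)$ are defined; the determinant identity above holds for every lower triangular $L$.

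There is no genuine obstacle here, only routine bookkeeping; the one point worth isolating is the block identity of the first paragraph, since it is precisely where lower-triangularity is used — for a general invertible $A$ the trailing submatrix of $\Ht{A}PA$ is not a congruence transform of the trailing submatrix of $P$, and the multiplicativity of $\det_i$ under $P\mapsto\Ht{A}PA$ fails. A conceptual reformulation, which could also serve as an alternative proof once existence and uniqueness of the factorization are recorded, is that $\mu_0(P),\dots,\mu_n(P)$ are exactly the diagonal entries of the diagonal factor $D$ in the bottom-up factorization $P=\Ht{M}DM$ with $M$ lower unitriangular and $D$ positive diagonal; in the unitriangular case $\Ht{L}PL=\Ht{(ML)}D(ML)$ is again such a factorization with the same $D$, which reproves the second assertion immediately, and the general case follows by absorbing the diagonal of $L$ into $D$.
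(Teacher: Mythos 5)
Your argument is correct and is essentially the paper's own proof: the same $2\times2$ block decomposition with respect to the splitting $\{0,\dots,i-1\}\cup\{i,\dots,n\}$, the same observation that the $(2,2)$ block of $\Ht{L}PL$ equals $\Ht{L_{22}}P_{22}L_{22}$ because $L$ is lower triangular, and the same determinant-ratio computation. Your closing remarks on invertibility and on the $\Ht{M}DM$ reformulation are sensible additions but do not change the route.
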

\begin{proof}
To compute $\mu_i$, subdivide an $(n+1)\times(n+1)$ matrix into
submatrices of sizes $i\times i$, $i\times(n+1-i)$, $(n+1-i)\times
i$ and $(n+1-i)\times(n+1-i)$,
$$\begin{aligned}
P&=\begin{pmatrix}
P_{11}&P_{12}\\
P_{21}&P_{22}
\end{pmatrix},
&L&=\begin{pmatrix}
L_{11}&0\\
L_{21}&L_{22}
\end{pmatrix}.
\end{aligned}$$
Note that $\det(P_{22})=\det_i(P)$,
$\displaystyle\det\left(L_{22}\right)=\prod_{j=i}^nl_{jj}$. By
definition,
$$\begin{aligned}
\det\nolimits_i\left(\Ht{L}PL\right)&=\det\nolimits_i\left(\begin{pmatrix}
\Ht{L_{11}}&\Ht{L_{21}}\\
0&\Ht{L_{22}}
\end{pmatrix}\begin{pmatrix}
P_{11}&P_{12}\\
P_{21}&P_{22}
\end{pmatrix}\begin{pmatrix}
L_{11}&0\\
L_{21}&L_{22}
\end{pmatrix}\right)\\
&=\det\left(\begin{pmatrix} 0&\Ht{L_{22}}
\end{pmatrix}\begin{pmatrix}
P_{11}&P_{12}\\
P_{21}&P_{22}
\end{pmatrix}\begin{pmatrix}
0\\
L_{22}
\end{pmatrix}\right)\\
&=\det\left(\Ht{L_{22}}P_{22}L_{22}\right)\\
&=\det\nolimits_i\left(P\right)\cdot\prod_{j=i}^n\left|l_{jj}\right|^2,
\end{aligned}$$
and similarly for $i+1$. Dividing concludes the proof.
\end{proof}

\begin{clm}\label{mured}
Let $P\in\mathcal{P}_n(\C)$ and $a>0$. For $0\leq i\leq n-1$,
$$
\mu_i\begin{pmatrix}
P\\
&a
\end{pmatrix}=\mu_i\left(P\right).
$$
\end{clm}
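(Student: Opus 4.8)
The plan is to reduce the statement to the definitions \eqref{defeta} and \eqref{defmu} and exploit the obvious block-diagonal structure of the enlarged matrix; this is a purely computational claim and I do not expect any serious obstacle beyond bookkeeping the index ranges. Write $M:=\begin{pmatrix}P\\&a\end{pmatrix}$ for the $(n+1)\times(n+1)$ positive definite Hermitian matrix in question, and index its rows and columns by $0,\dots,n$, so that $P=(M_{jk})_{0\le j,k\le n-1}$, $a=M_{nn}$, and $M_{jn}=M_{nj}=0$ for $j<n$.

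First I would establish the identity $\det\nolimits_i(M)=a\cdot\det\nolimits_i(P)$ for every $0\le i\le n$. Indeed, the submatrix $(M_{jk})_{i\le j,k\le n}$ whose determinant defines $\det\nolimits_i(M)$ is block diagonal, equal to $\begin{pmatrix}Q_i\\&a\end{pmatrix}$, where $Q_i:=(P_{jk})_{i\le j,k\le n-1}$ is exactly the submatrix of $P$ whose determinant is $\det\nolimits_i(P)$ by \eqref{defeta}. For $i=n$ the block $Q_n$ is the empty matrix, with $\det Q_n=1=\det\nolimits_n(P)$ by the convention in Definition \ref{defetamu}. Expanding the determinant of a block-diagonal matrix then yields $\det\nolimits_i(M)=a\cdot\det Q_i=a\cdot\det\nolimits_i(P)$.

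Now fix $0\le i\le n-1$. Then $i+1\le n$, so the identity just established applies to both $i$ and $i+1$, and \eqref{defmu} gives
\[
\mu_i(M)=\frac{\det\nolimits_i(M)}{\det\nolimits_{i+1}(M)}=\frac{a\cdot\det\nolimits_i(P)}{a\cdot\det\nolimits_{i+1}(P)}=\frac{\det\nolimits_i(P)}{\det\nolimits_{i+1}(P)}=\mu_i(P),
\]
which is the assertion. The one point requiring attention is that the cancellation of $a$ uses the block identity for $\det\nolimits_{i+1}$ as well, and this is valid precisely because the hypothesis $i\le n-1$ forces $i+1\le n$; for $i=n$ one would instead have $\det\nolimits_{n+1}(M)=1\ne a=\det\nolimits_n(M)$, so the restriction on $i$ in the statement cannot be dropped.
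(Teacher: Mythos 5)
Your proof is correct and follows exactly the paper's argument: establish $\det\nolimits_i\begin{pmatrix}P\\&a\end{pmatrix}=a\cdot\det\nolimits_i(P)$ for $0\le i\le n$ from the block-diagonal structure, then divide using \eqref{defmu}. The extra remarks on the empty-matrix convention and on why the restriction $i\le n-1$ is needed are accurate but not essential.
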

\begin{proof}
By (\ref{defeta}), for $0\leq i\leq n$, one has
$\det_i\begin{pmatrix}
P\\
&a
\end{pmatrix}=a\cdot\det_i(P)$. Using (\ref{defmu}) we conclude.
\end{proof}


We work on the chart $\{Z_n\neq0\}$. The space $H^0(\PP^n,H)$ has a
canonical basis
$$
s_0=z_0,\ldots,s_{n-1}=z_{n-1},s_n=1.
$$
For $P\in\mathcal{P}_{n+1}(\C)$, one can write $P=\Ht{L}DL$, where
$L$ is lower unitriangular, and $D$ is real diagonal \cite[Theorem
4.1.2]{VG}. Indeed, $D=\diag\{\mu_0(P),\ldots,\mu_n(P)\}$ by Claim
\ref{mu}.

To simplify the computation, use a new homogeneous coordinate on
$\PP^n$
$$
W:=LZ
$$
The standard coordinate chart on $\{W_n\neq0\}$ is
$w_i=\frac{W_i}{W_n}$ with
\begin{equation}\label{cov2}
w:=(w_0,\ldots,w_{n-1},1)^\mathsf{T}=\frac{Z_n}{W_n}Lz.
\end{equation}
Fix a smooth volume form
$$
\mu=\left(\frac{\sqrt{-1}}{2\pi}\right)^n\frac{dw_0\wedge
d\ol{w}_0\wedge\ldots\wedge dw_{n-1}\wedge
d\ol{w}_{n-1}}{\left(\Ht{w}Dw\right)^{n+1}}.
$$

For a multi-index $I=(I_0,\ldots,I_n)$ set
$$\begin{aligned}
\left|I\right|&:=I_0+\cdots+I_n,\\
I!&:=I_0!\cdots I_n!.
\end{aligned}$$

\begin{ptn}
Define the sections of $H^0(X,H)$
$$
t_i:=s_i+\sum_{j=0}^{i-1}l_{ij}s_j,
$$
or in matrix form,
\begin{equation}\label{cov1}
t:=Ls.
\end{equation}
Then for any multi-index $I$, the section $t^I\in H^0(X,|I|H)$ is
the Chebyshev section for
$(\PP^n,Y_\bullet,H,\varphi_P,\mu,\frac{1}{|I|}(I_0,\ldots,I_{n-1}),|I|)$.
\end{ptn}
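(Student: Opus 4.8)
The plan is to invoke the Gram--Schmidt criterion, Proposition \ref{orthoChebysec}: fixing the degree $m=|I|$, it is enough to verify that $\{t^I\}_{|I|=m}$ is an orthogonal basis of $H^0(\PP^n,mH)$ for $Hilb_m(\varphi_P,\mu)$ and that $\ell(t^I)=z^{(I_0,\dots,I_{n-1})}$ with leading coefficient $1$, so that $t^I\in A_{m,\alpha}$ for $\alpha=\tfrac1m(I_0,\dots,I_{n-1})$. The indexing matches: $I\mapsto\tfrac1m(I_0,\dots,I_{n-1})$ is a bijection from $\{I\in\N^{n+1}:|I|=m\}$ onto $\tfrac1m\Delta_m(H)$ because $I_n=m-\sum_{i<n}I_i$ is determined, so Proposition \ref{orthoChebysec} will give $t^I=Ch_{m,\alpha}$ directly.

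First the leading terms. Since the lexicographic order is a monomial order, $\nu(ss')=\nu(s)+\nu(s')$ and $\ell(ss')=\ell(s)\ell(s')$. For $0\le i\le n-1$ we have $t_i=z_i+\sum_{j<i}l_{ij}z_j$, and the exponent of $z_i$ is lexicographically smaller than that of any $z_j$ with $j<i$; hence $\ell(t_i)=z_i$ with coefficient $1$, while $\ell(t_n)=\ell(1)=1$. Therefore $\ell(t^I)=\prod_{i=0}^{n-1}z_i^{I_i}=z^{(I_0,\dots,I_{n-1})}$, monic, and $t^I\in H^0(\PP^n,|I|H)$ since each $t_i\in H^0(\PP^n,H)$. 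Moreover $t=Ls$ with $L$ an invertible unitriangular matrix, so the degree-$m$ monomials $\{t^I\}_{|I|=m}$ are obtained from the monomial basis $\{s^I\}_{|I|=m}$ of $H^0(\PP^n,mH)$ by the ($m$-th symmetric power of the) invertible map $L$, hence they form a basis.

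The real content is orthogonality, and the device is the coordinate change $W=LZ$ set up just before the statement. From $s=z$, \eqref{cov1} and \eqref{cov2} one gets $t=Lz=\tfrac{W_n}{Z_n}w$, hence $t^I=(W_n/Z_n)^{m}w^I$; and from $P=\Ht{L}DL$ one gets $\Ht{z}Pz=\Ht{(Lz)}D(Lz)=|W_n/Z_n|^2\,\Ht{w}Dw$, so the weight on $mH$ is $e^{-m\varphi_P}=|W_n/Z_n|^{-2m}(\Ht{w}Dw)^{-m}$. Substituting these and the stated $\mu$ into $Hilb_m(\varphi_P,\mu)(t^I,t^J)=\int_{\PP^n}t^I\,\overline{t^J}\,e^{-m\varphi_P}\,d\mu$, all powers of $W_n/Z_n$ cancel and, with $d_k:=\mu_k(P)>0$ and $w_n\equiv1$, one is left with
$$
Hilb_m(\varphi_P,\mu)(t^I,t^J)=\left(\frac{\sqrt{-1}}{2\pi}\right)^{n}\int_{\C^n}\frac{w^I\,\overline{w^J}}{\left(\sum_{k=0}^{n}d_k|w_k|^2\right)^{m+n+1}}\,dw_0\wedge d\overline{w}_0\wedge\dots\wedge dw_{n-1}\wedge d\overline{w}_{n-1}.
$$
The denominator and the Lebesgue measure depend only on $|w_0|,\dots,|w_{n-1}|$, so in polar coordinates $w_k=r_ke^{\sqrt{-1}\theta_k}$ the $\theta_k$-integral over $[0,2\pi)$ kills everything unless $I_k=J_k$ for all $k\le n-1$; since $|I|=|J|=m$ this forces $I=J$. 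Hence $\{t^I\}_{|I|=m}$ is orthogonal. These integrals are finite and the diagonal ones strictly positive, since $\varphi_P$ is smooth on $\PP^n$ and $\mu$ is a positive multiple of the Fubini--Study volume form of the metric $D$ (hence a genuine smooth volume form, which also makes the choice of this particular $\mu$ legitimate, cf.\ Definition \ref{Chebydef}). Proposition \ref{orthoChebysec} then identifies $t^I$ as the claimed Chebyshev section.

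I expect the only real obstacle to be the coordinate change itself: $W=LZ$ must simultaneously diagonalize the Hermitian form $P$ into $D=\diag\{\mu_0(P),\dots,\mu_n(P)\}$ (via $P=\Ht{L}DL$, using Claim \ref{mu}) and turn the monic sections $t^I$ into genuine monomials $w^I$. Once that alignment is achieved, orthogonality is a one-line consequence of the rotation invariance of the diagonal Fubini--Study weight, and the rest is bookkeeping already packaged in Proposition \ref{orthoChebysec}.
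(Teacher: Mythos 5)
Your proposal is correct and follows essentially the same route as the paper: the change of coordinates $W=LZ$ with $P=\Ht{L}DL$, the cancellation of the powers of $W_n/Z_n$, orthogonality via polar coordinates and rotation invariance of the diagonal weight, and the identification through the Gram--Schmidt criterion of Proposition \ref{orthoChebysec}. Your added checks (the monic leading term $\ell(t^I)=z^{(I_0,\dots,I_{n-1})}$ via multiplicativity of $\ell$ under the lexicographic order, and the indexing bijection with $\frac{1}{m}\Delta_m(H)$) are details the paper leaves implicit but are entirely in the same spirit.
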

\begin{proof}
For multi-indices $I$ and $J$ with $|I|=|J|=m$, $t^I$ and $t^J$ are
sections of $mH$. Note that by (\ref{cov2}) and (\ref{cov1}),
\begin{equation}\label{cov3}
e^{-\varphi_P}=\Ht{Lz}DLz=\left|\frac{W_n}{Z_n}\right|^2\Ht{w}Dw,
\end{equation}
and
\begin{equation}\label{cov4}
t=Lz=\frac{W_n}{Z_n}w.
\end{equation}
So
\begin{multline*}
Hilb_m\left(\varphi_P,\mu\right)\left(t^I,t^J\right)\\
\begin{aligned}
&=\left(\frac{\sqrt{-1}}{2\pi}\right)^n\int_{V_n}\frac{w^I\ol{w}^J}{\left(\Ht{w}Dw\right)^m}\cdot\frac{dw_0\wedge d\ol{w}_0\wedge\ldots\wedge dw_{n-1}\wedge d\ol{w}_{n-1}}{\left(\Ht{w}Dw\right)^{n+1}}\\
&=\left(\frac{\sqrt{-1}}{2\pi}\right)^n\int_{V_n}\frac{\prod\limits_{j=0}^{n-1}w_j^{I_j}\ol{w}_j^{J_j}\cdot
dw_0\wedge d\ol{w}_0\wedge\ldots\wedge dw_{n-1}\wedge
d\ol{w}_{n-1}}{\left(\mu_n\left(P\right)+\sum\limits_{j=0}^{n-1}\mu_j\left(P\right)\left|w_j\right|^2\right)^{m+n+1}}.
\end{aligned}
\end{multline*}

Using polar coordinates,
\begin{multline*}
Hilb_m\left(\varphi_P,\mu\right)\left(t^I,t^J\right)\\
\begin{aligned}
&=\frac{1}{\pi^n}\int_{\R_+^n}\frac{\prod\limits_{j=0}^{n-1}r_j^{I_j+J_j+1}dr_0\cdots dr_{n-1}}{\left(\mu_n\left(P\right)+\sum\limits_{j=0}^{n-1}\mu_j\left(P\right)r_j^2\right)^{m+n+1}}\prod_{j=0}^{n-1}\int_0^{2\pi}e^{i\left(I_j-J_j\right)\theta_j}d\theta_j\\
&=2^n\delta_{IJ}\int_{\R_+^n}\frac{\prod\limits_{j=0}^{n-1}r_j^{2I_j+1}}{\left(\mu_n\left(P\right)+\sum\limits_{j=0}^{n-1}\mu_j\left(P\right)r_j^2\right)^{m+n+1}}\cdot
dr_0\cdots dr_{n-1}.
\end{aligned}
\end{multline*}
In particular, $t^I$ and $t^J$ are orthogonal if $I\neq J$. Notice
that $t^I$ is monic with $\ell(t^I)=z^I=z_0^{I_0}\cdots
z_{n-1}^{I_{n-1}}$. These two facts imply, by Proposition
\ref{orthoChebysec}, that $t^I$ is the Chebyshev section
$Ch_{m,\frac{1}{m}(I_0,\ldots,I_{n-1})}$, or equivalently, for
$\alpha\in\frac{1}{m}\Delta_m(L)$,
$$
Ch_{m,\alpha}=t_0^{m\alpha_0}\cdots t_{n-1}^{m\alpha_{n-1}}\cdot
t_n^{m\left(1-\alpha_0-\cdots-\alpha_{n-1}\right)}.
$$
\end{proof}

For $x>0$, let
$$
\Gamma\left(x\right):=\int_0^{+\infty}e^{-t}t^{x-1}dt.
$$
and
$$
B\left(x_0,\ldots,x_n\right):=\frac{\Gamma\left(x_0\right)\cdots\Gamma\left(x_n\right)}{\Gamma\left(x_0+\cdots+x_n\right)}.
$$
The following integral formula for the multivariate beta function is
also needed
\cite[Equation 49.5]{KBJ}.

\begin{lem}\label{multibeta}
Let $k_0,\ldots,k_{n-1}\in\R_{>0}$ and $\displaystyle
l>\frac{1}{2}\sum_{i=0}^{n-1}k_i$. Then
$$
2^n\int_{\R_+^n}\frac{\prod\limits_{j=0}^{n-1}x_j^{k_j-1}dx_0\cdots
dx_{n-1}}{\left(1+\sum\limits_{j=0}^{n-1}x_j^2\right)^l}=B\left(\frac{k_0}{2},\ldots,\frac{k_{n-1}}{2},l-\frac{1}{2}\sum_{i=0}^{n-1}k_i\right).
$$
\end{lem}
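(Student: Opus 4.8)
The plan is to reduce the integral to a classical Dirichlet integral and then evaluate the latter via the integral representation of the Gamma function. First I would substitute $u_j=x_j^2$ in each of the $n$ variables, so that $x_j^{k_j-1}\,dx_j=\tfrac12\,u_j^{k_j/2-1}\,du_j$; the resulting $n$ factors of $\tfrac12$ cancel the prefactor $2^n$, and the claim becomes
$$
\int_{\R_+^n}\frac{\prod_{j=0}^{n-1}u_j^{a_j-1}\,du_0\cdots du_{n-1}}{\bigl(1+\sum_{j=0}^{n-1}u_j\bigr)^l}=\frac{\Gamma(a_0)\cdots\Gamma(a_{n-1})\,\Gamma\bigl(l-\sum_{j=0}^{n-1}a_j\bigr)}{\Gamma(l)},
$$
where $a_j:=k_j/2>0$ and, by hypothesis, $l>\sum_{j=0}^{n-1}a_j$. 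The right-hand side is by definition $B(a_0,\ldots,a_{n-1},l-\sum_j a_j)$, since the $n+1$ arguments sum to $l$.

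To evaluate the Dirichlet integral, I would use the identity
$$
\frac{1}{\bigl(1+\sum_{j=0}^{n-1}u_j\bigr)^l}=\frac{1}{\Gamma(l)}\int_0^{+\infty}t^{l-1}e^{-t}e^{-t\sum_{j=0}^{n-1}u_j}\,dt ,
$$
valid for $l>0$, which follows from rescaling $t$ in the definition of $\Gamma(l)$. All integrands being non-negative, Tonelli's theorem permits interchanging the order of integration, so the left-hand side of the displayed Dirichlet identity equals
$$
\frac{1}{\Gamma(l)}\int_0^{+\infty}t^{l-1}e^{-t}\prod_{j=0}^{n-1}\Bigl(\int_0^{+\infty}u_j^{a_j-1}e^{-tu_j}\,du_j\Bigr)dt .
$$
Each inner integral equals $\Gamma(a_j)\,t^{-a_j}$ (again by rescaling the Gamma integral; here $a_j>0$ is used), so the expression collapses to $\dfrac{\prod_{j=0}^{n-1}\Gamma(a_j)}{\Gamma(l)}\int_0^{+\infty}t^{\,l-1-\sum_j a_j}e^{-t}\,dt$, and the remaining integral is $\Gamma\bigl(l-\sum_j a_j\bigr)$. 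This last step is precisely where the hypothesis $l>\tfrac12\sum_i k_i=\sum_j a_j$ enters, guaranteeing that the exponent $l-1-\sum_j a_j>-1$ so that the integral converges. Collecting terms yields the right-hand side.

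There is no real obstacle here; the only care needed is with the convergence hypotheses ($k_j>0$ for the inner integrals, $l>\tfrac12\sum_i k_i$ for the outer one) and with the interchange of integrals, which is immediate from Tonelli since every factor is positive. As an alternative to the Gamma-representation trick, one can prove the Dirichlet identity by induction on $n$: integrate first over $u_{n-1}$, substitute $u_{n-1}=(1+\sum_{j<n-1}u_j)\,v$ to factor that integral as $B(a_{n-1},l-a_{n-1})$ times an $(n-1)$-dimensional integral of the same form with $l$ replaced by $l-a_{n-1}$, then apply the one-variable beta integral $\int_0^{+\infty}v^{a-1}(1+v)^{-l}\,dv=B(a,l-a)$ together with the inductive hypothesis (the hypothesis on $l$ is exactly what makes $l-a_{n-1}>\sum_{j<n-1}a_j$); the base case $n=1$ is that same beta integral. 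Either route recovers \cite[Equation 49.5]{KBJ}.
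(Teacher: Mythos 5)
Your argument is correct. Note, however, that the paper does not prove this lemma at all: it is quoted verbatim from \cite[Equation 49.5]{KBJ} and used as a black box in the norm computation for the sections $t^I$. What you have supplied is therefore a complete, self-contained replacement for that citation. Your two steps are both sound: the substitution $u_j=x_j^2$ correctly produces the factor $\tfrac12 u_j^{k_j/2-1}\,du_j$ in each variable, cancelling the $2^n$, and reduces the statement to the Dirichlet-type integral $\int_{\R_+^n}\prod_j u_j^{a_j-1}\bigl(1+\sum_j u_j\bigr)^{-l}\,du$ with $a_j=k_j/2$; the representation $\bigl(1+\sum_j u_j\bigr)^{-l}=\Gamma(l)^{-1}\int_0^\infty t^{l-1}e^{-t(1+\sum_j u_j)}\,dt$ together with Tonelli (legitimate, since every factor is non-negative) then factors the integral into $\prod_j\Gamma(a_j)\,t^{-a_j}$ and a final $\Gamma\bigl(l-\sum_j a_j\bigr)$, with the hypotheses $k_j>0$ and $l>\tfrac12\sum_i k_i$ entering exactly where you say they do. The inductive alternative you sketch is also standard and correct. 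The only thing each approach ``buys'' differently is transparency: the paper's citation keeps the exposition short, while your derivation makes the role of the convergence hypotheses explicit and would let the paper dispense with the reference to \cite{KBJ} entirely.
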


\begin{ptn}
The norms of the section $t^I$ is given by
$$
\left\|t^I\right\|_{\varphi_P,\mu}^2=\frac{I!}{\left(m+n\right)!\prod\limits_{j=0}^n\mu_j\left(P\right)^{I_j+1}}.
$$
\end{ptn}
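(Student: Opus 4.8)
The plan is to start from the orthogonality identity established in the previous proposition, which reduces the norm to the single real integral
$$
\left\|t^I\right\|_{\varphi_P,\mu}^2=2^n\int_{\R_+^n}\frac{\prod\limits_{j=0}^{n-1}r_j^{2I_j+1}\,dr_0\cdots dr_{n-1}}{\left(\mu_n\left(P\right)+\sum\limits_{j=0}^{n-1}\mu_j\left(P\right)r_j^2\right)^{m+n+1}},
$$
with $m=|I|$. First I would normalize the quadratic form in the denominator: substitute $r_j=\left(\mu_n(P)/\mu_j(P)\right)^{1/2}x_j$ for $0\le j\le n-1$, which turns the denominator into $\mu_n(P)^{m+n+1}\bigl(1+\sum x_j^2\bigr)^{m+n+1}$ and produces Jacobian and monomial factors $\prod_j\left(\mu_n(P)/\mu_j(P)\right)^{I_j+1}$. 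After collecting the powers of $\mu_n(P)$, the exponent works out to $\sum_{j=0}^{n-1}(I_j+1)-(m+n+1)=-I_n-1$, so the prefactor is exactly $\prod_{j=0}^{n-1}\mu_j(P)^{-(I_j+1)}\cdot\mu_n(P)^{-(I_n+1)}=\prod_{j=0}^{n}\mu_j(P)^{-(I_j+1)}$, which is the denominator claimed in the statement.

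Next I would evaluate the remaining dimensionless integral
$$
2^n\int_{\R_+^n}\frac{\prod\limits_{j=0}^{n-1}x_j^{2I_j+1}\,dx_0\cdots dx_{n-1}}{\left(1+\sum\limits_{j=0}^{n-1}x_j^2\right)^{m+n+1}}
$$
by applying Lemma \ref{multibeta} with $k_j=2I_j+2$ and $l=m+n+1$; the hypothesis $l>\tfrac12\sum k_j$ holds since $\tfrac12\sum_{j=0}^{n-1}(2I_j+2)=m-I_n+n\le m+n<m+n+1$. The lemma gives
$$
B\!\left(I_0+1,\ldots,I_{n-1}+1,\;m+n+1-(m-I_n+n)\right)=B\!\left(I_0+1,\ldots,I_{n-1}+1,I_n+1\right).
$$
Unwinding the multivariate beta function via $B(x_0,\ldots,x_n)=\prod\Gamma(x_i)/\Gamma(\sum x_i)$ and using $\Gamma(k+1)=k!$ for nonnegative integers, together with $\sum_{j=0}^{n}(I_j+1)=m+n+1$, this equals $\dfrac{\prod_{j=0}^{n}I_j!}{(m+n)!}=\dfrac{I!}{(m+n)!}$. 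Multiplying by the prefactor from the first step yields the asserted formula.

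I do not expect a genuine obstacle here; the computation is a bookkeeping exercise in matching exponents. The one point requiring a little care is the off-by-one in the exponents: one must track that the volume form carries an extra $(\Ht{w}Dw)^{n+1}$ in the denominator (so the total power is $m+n+1$, not $m$), and that the polar-coordinate Jacobian contributes the $+1$ in each $r_j^{2I_j+1}$, which then shifts $k_j$ from $2I_j+1$ to $2I_j+2$ when Lemma \ref{multibeta} is applied. Getting these shifts right is exactly what makes the final exponent of $\mu_n(P)$ collapse to $-I_n-1$ and the beta-function arguments collapse to the integers $I_j+1$, so I would present that exponent arithmetic explicitly rather than leaving it to the reader.
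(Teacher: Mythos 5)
Your proposal is correct and follows essentially the same route as the paper: the rescaling $r_j=\sqrt{\mu_n(P)/\mu_j(P)}\,\rho_j$, the collection of the $\mu_n(P)$ powers into the exponent $-(I_n+1)$, and the application of Lemma \ref{multibeta} with $k_j=2I_j+2$, $l=m+n+1$ to obtain $B(I_0+1,\ldots,I_n+1)=I!/(m+n)!$ are exactly the steps in the paper's computation. The exponent bookkeeping you flag is indeed the only delicate point, and you have it right.
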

\begin{proof}
To compute the integral, one can use variables
$\rho_j:=\sqrt{\frac{\mu_j(P)}{\mu_n(P)}}r_j$ and apply Lemma
\ref{multibeta},
$$\begin{aligned}
\left\|t^I\right\|_{\varphi_P,\mu}^2&=2^n\int_{\R_+^n}\frac{\prod\limits_{j=0}^{n-1}r_j^{2I_j+1}\cdot dr_0\cdots dr_{n-1}}{\mu_n\left(P\right)^{m+n+1}\left(1+\sum\limits_{j=0}^{n-1}\frac{\mu_j\left(P\right)}{\mu_n\left(P\right)}r_j^2\right)^{m+n+1}}\\
&=2^n\int_{\R_+^n}\frac{\prod\limits_{j=0}^{n-1}\frac{\mu_n\left(P\right)^{I_j+1}}{\mu_j\left(P\right)^{I_j+1}}\prod\limits_{j=0}^{n-1}\rho_j^{2I_j+1}\cdot d\rho_0\cdots d\rho_{n-1}}{\mu_n\left(P\right)^{m+n+1}\left(1+\sum\limits_{j=0}^{n-1}\rho_j^2\right)^{m+n+1}}\\
&=\frac{2^n}{\prod\limits_{j=0}^n\mu_j\left(P\right)^{I_j+1}}\int_{\R_+^n}\frac{\prod\limits_{j=0}^{n-1}\rho_j^{2I_j+1}\cdot d\rho_0\cdots d\rho_{n-1}}{\left(1+\sum\limits_{j=0}^{n-1}\rho_j^2\right)^{m+n+1}}\\
&=\frac{1}{\prod\limits_{j=0}^n\mu_j\left(P\right)^{I_j+1}}B\left(I_0+1,\ldots,I_n+1\right)\\
&=\frac{I!}{\left(m+n\right)!\prod\limits_{j=0}^n\mu_j\left(P\right)^{I_j+1}}.
\end{aligned}$$
\end{proof}

Recall Stirling's formula
\begin{lem}\label{stl}
$$
\log n!=n\log n-n+\log\sqrt{2\pi n}+o\left(1\right)
$$
as $n\to\infty$.
\end{lem}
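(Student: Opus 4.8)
This is the classical Stirling expansion, so in practice one would just cite it; but a self-contained argument is quickest via Laplace's method applied to the integral representation $n!=\Gamma(n+1)=\int_0^{+\infty}t^ne^{-t}\,dt$. First I would rescale $t=n(1+u)$, which gives
$$
n!=n^{n+1}e^{-n}\int_{-1}^{+\infty}e^{\,ng(u)}\,du,\qquad g(u):=\log(1+u)-u .
$$
The function $g$ is smooth and strictly concave on $(-1,+\infty)$, attains its maximum $g(0)=0$ with $g'(0)=0$ and $g''(0)=-1$, and tends to $-\infty$ both as $u\to-1^+$ and as $u\to+\infty$ (the latter linearly, like $-u$). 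Fixing a small $\delta\in(0,1)$, I would split the integral over $|u|\le\delta$ and over $|u|\ge\delta$.

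On the tail one has $\int_{-1}^{+\infty}e^{g(u)}\,du=\int_{-1}^{+\infty}(1+u)e^{-u}\,du=e<\infty$ and $\sup_{|u|\ge\delta}g(u)=-\eta<0$, so $\int_{|u|\ge\delta}e^{\,ng(u)}\,du\le e^{-(n-1)\eta}\int_{-1}^{+\infty}e^{g(u)}\,du$, which decays exponentially and still contributes $o(1)$ after multiplication by $\sqrt n$. On the central range I would substitute $u=v/\sqrt n$, turning $\sqrt n\int_{|u|\le\delta}e^{ng(u)}\,du$ into $\int_{|v|\le\delta\sqrt n}e^{\,ng(v/\sqrt n)}\,dv$. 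Taylor's theorem together with the bound $g''(\xi)\le-(1+\delta)^{-2}$ for $|\xi|\le\delta$ gives $g(u)\le-\tfrac{1}{2(1+\delta)^2}u^2$ on $|u|\le\delta$, so the integrand is dominated by the integrable function $e^{-v^2/(2(1+\delta)^2)}$, while $ng(v/\sqrt n)\to-v^2/2$ pointwise; dominated convergence then yields $\int_{|v|\le\delta\sqrt n}e^{ng(v/\sqrt n)}\,dv\to\int_{-\infty}^{+\infty}e^{-v^2/2}\,dv=\sqrt{2\pi}$. Combining the two ranges gives $n!=n^{n+1}e^{-n}\cdot n^{-1/2}(\sqrt{2\pi}+o(1))=\sqrt{2\pi n}\,(n/e)^n(1+o(1))$, and taking logarithms produces exactly $\log n!=n\log n-n+\log\sqrt{2\pi n}+o(1)$.

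A shorter route I would keep in reserve, should one wish to avoid the Gamma integral, is to set $a_n:=\log n!-(n+\tfrac12)\log n+n$ and check, from the Taylor expansion of $\log(1+\tfrac1n)$, that $a_n-a_{n+1}=(n+\tfrac12)\log(1+\tfrac1n)-1=O(1/n^2)$; hence $a_n$ converges to a constant $a$, so $n!=e^a\sqrt n\,(n/e)^n(1+o(1))$, and the value $e^a=\sqrt{2\pi}$ is pinned down by inserting this asymptotic into the Wallis product $\tfrac\pi2=\lim_n\frac{2^{4n}(n!)^4}{((2n)!)^2(2n+1)}$, which forces $e^{2a}/4=\pi/2$. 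In either approach the only genuine obstacle is the evaluation of the constant $\sqrt{2\pi}$: the soft monotonicity/telescoping argument alone yields only $n!\sim C\sqrt n\,(n/e)^n$ with $C$ undetermined, and identifying $C$ requires an external input — the Gaussian integral $\int_{\mathbb{R}}e^{-v^2/2}\,dv=\sqrt{2\pi}$ in the Laplace-method proof, or equivalently Wallis's formula in the second — whereas all the remaining estimates are entirely routine.
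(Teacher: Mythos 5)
Your argument is correct and complete. Note, however, that the paper does not prove this lemma at all: it is introduced with the words ``Recall Stirling's formula'' and used as a classical fact, so there is nothing in the source to compare against step by step. Your primary route --- Laplace's method applied to $n!=\int_0^{\infty}t^ne^{-t}\,dt$ after the rescaling $t=n(1+u)$ --- is the standard self-contained proof, and all the estimates check out: the tail bound via $\int_{-1}^{\infty}e^{g(u)}\,du=e$ together with $\sup_{|u|\geq\delta}g(u)<0$, the uniform Gaussian domination $g(u)\leq-\tfrac{1}{2(1+\delta)^{2}}u^{2}$ on $|u|\leq\delta$ from Taylor's theorem with the concavity bound on $g''$, and the dominated-convergence passage to $\int_{\R}e^{-v^{2}/2}\,dv=\sqrt{2\pi}$. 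Your fallback via the telescoping sequence $a_n=\log n!-(n+\tfrac12)\log n+n$ with $a_n-a_{n+1}=O(1/n^{2})$ plus Wallis's product is also sound, and you correctly isolate the one non-routine point in either approach, namely the identification of the constant $\sqrt{2\pi}$. Given that the paper only needs the coarse consequence recorded in its Corollary \ref{stlquo} (where the $\log\sqrt{2\pi n}$ term is absorbed into the $o(1)$ after division by $k$), citing the classical result, as the paper does, would also have been acceptable; but supplying the proof as you did is a strictly stronger deliverable.
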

\begin{cor}\label{stlquo}
Let $m$ be a positive integer. Then
$$
\frac{1}{k}\log\left(km\right)!=m\log km-m+o\left(1\right)
$$
as $k\to\infty$.
\end{cor}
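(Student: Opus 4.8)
The plan is to simply substitute $n=km$ into Stirling's formula (Lemma \ref{stl}) and divide through by $k$, then check that each resulting error term tends to $0$ as $k\to\infty$ with $m$ fixed.

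Concretely, I would first write, applying Lemma \ref{stl} with $n=km$,
$$
\log\left(km\right)!=km\log km-km+\log\sqrt{2\pi km}+o\left(1\right),
$$
where the $o(1)$ term is as $km\to\infty$, hence as $k\to\infty$. Dividing both sides by $k$ gives
$$
\frac{1}{k}\log\left(km\right)!=m\log km-m+\frac{1}{k}\log\sqrt{2\pi km}+\frac{1}{k}o\left(1\right).
$$
It then remains to observe that both of the last two summands are $o(1)$ as $k\to\infty$: the term $\frac{1}{k}o(1)$ is bounded by $\frac{1}{k}$ times a null sequence, hence vanishes, and
$$
\frac{1}{k}\log\sqrt{2\pi km}=\frac{1}{2k}\left(\log\left(2\pi m\right)+\log k\right)\longrightarrow0
$$
since $\frac{\log k}{k}\to0$ and $m$ is held fixed. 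Combining these estimates yields the claimed asymptotic.

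There is essentially no obstacle here; the only point requiring a word of care is that the $o(1)$ in Stirling's formula is an error that is small as its argument $km$ grows, so one must note that $k\to\infty$ (with $m\geq 1$ an integer) indeed forces $km\to\infty$. Everything else is routine bookkeeping of lower-order terms.
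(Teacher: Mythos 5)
Your proof is correct and is essentially identical to the paper's: both substitute $n=km$ into Stirling's formula, divide by $k$, and observe that $\frac{1}{k}\log\sqrt{2\pi km}$ and the residual error term are $o(1)$ as $k\to\infty$. No issues.
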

\begin{proof}
By Lemma \ref{stl},
$$\begin{aligned}
\frac{1}{k}\log\left(km\right)!&=m\log km-m+\frac{1}{k}\log\sqrt{2\pi km}+o\left(\frac{1}{k}\right)\\
&=m\log km-m+o\left(1\right)
\end{aligned}$$
as $k\to\infty$.
\end{proof}
\begin{cor}\label{stlsum}
Let $m_0,\ldots,m_n$ be positive integers and $M=m_0+\cdots+m_n$.
Then
$$
\lim_{k\to\infty}\frac{1}{k}\log\frac{\prod\limits_{j=0}^n\left(km_j\right)!}{\left(kM\right)!}=\sum_{j=0}^nm_j\log\frac{m_j}{M}.
$$
\end{cor}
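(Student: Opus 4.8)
The plan is to reduce everything to Corollary \ref{stlquo}, which already gives the asymptotics of $\frac1k\log(km)!$ for a single positive integer $m$. First I would write the left-hand side as a difference of logarithms,
$$
\frac1k\log\frac{\prod_{j=0}^n(km_j)!}{(kM)!}=\sum_{j=0}^n\frac1k\log(km_j)!-\frac1k\log(kM)!,
$$
and then substitute the expansion $\frac1k\log(km)!=m\log km-m+o(1)$ from Corollary \ref{stlquo} into each of the $n+2$ terms. Since there are only finitely many terms, the error terms combine into a single $o(1)$ as $k\to\infty$.

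Next I would carry out the bookkeeping. Summing the contributions of the numerator gives $\sum_j(m_j\log km_j-m_j)+o(1)=\sum_j m_j\log km_j-M+o(1)$, using $\sum_j m_j=M$; the denominator contributes $-(M\log kM-M)+o(1)$. The two ``$-M$'' and ``$+M$'' cancel, leaving $\sum_j m_j\log km_j-M\log kM+o(1)$. Expanding $\log km_j=\log k+\log m_j$ and $\log kM=\log k+\log M$, the $\log k$ terms cancel because $\sum_j m_j\log k=M\log k$, and one is left with $\sum_j m_j\log m_j-M\log M+o(1)$.

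Finally I would rewrite $\sum_j m_j\log m_j-M\log M=\sum_j m_j\log m_j-\bigl(\sum_j m_j\bigr)\log M=\sum_j m_j\log\frac{m_j}{M}$ and let $k\to\infty$ so that the $o(1)$ disappears, yielding the claimed limit. There is no real obstacle here: the only thing to be careful about is that the number of summands is fixed (independent of $k$), so absorbing the finitely many $o(1)$'s into one is legitimate; everything else is elementary algebra of logarithms.
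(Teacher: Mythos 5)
Your proposal is correct and follows essentially the same route as the paper: both apply Corollary \ref{stlquo} termwise to the $n+2$ factorials, absorb the finitely many $o(1)$'s, and cancel the $\log k$ and $-M$ contributions using $\sum_j m_j=M$. Your write-up just makes the bookkeeping more explicit than the paper's condensed version.
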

\begin{proof}
By Corollary \ref{stlquo},
$$\begin{aligned}
\lim_{k\to\infty}\frac{1}{k}\log\frac{\prod\limits_{j=0}^n\left(km_j\right)!}{\left(kM\right)!}&=\lim_{k\to\infty}\left(\sum_{j=0}^n\left(m_j\log km_j-m_j\right)-\sum_{j=0}^nm_j\log kM+M\right)\\
&=\sum_{j=0}^nm_j\log\frac{m_j}{M}.
\end{aligned}$$
\end{proof}

\begin{proof}[Proof of Theorem \ref{CompCheby}]
For $\alpha\in\Int\Delta(H)\cap\Q^n$, one can find $M\in\N$ so that
$M\alpha\in\N^n$. In Definition \ref{Chebydef} pick $m_k=Mk$ and
$\alpha_{m_k}=\alpha$. By Corollary \ref{stlsum},
\begin{equation}\label{cdense}\begin{aligned}
c\left[\varphi_P\right]\left(\alpha\right)&=\lim_{k\to\infty}\frac{1}{Mk}\log\left\|Ch_{Mk,\alpha}\right\|_{\varphi_P,\mu}^2\\
&=\lim_{k\to\infty}\frac{1}{Mk}\log\frac{\left(Mk\left(1-\sum\limits_{j=0}^{n-1}\alpha_j\right)\right)!\prod\limits_{j=0}^{n-1}\left(Mk\alpha_j\right)!}{\left(Mk+n\right)!\mu_n\left(P\right)^{Mk\left(1-\sum\limits_{j=0}^{n-1}\alpha_j\right)+1}\prod\limits_{j=0}^{n-1}\mu_j\left(P\right)^{Mk\alpha_j+1}}\\
&=-\left(1-\sum_{j=0}^{n-1}\alpha_j\right)\log\mu_n\left(P\right)-\sum_{j=0}^{n-1}\alpha_j\log\mu_j\left(P\right)\\
&\qquad\qquad\qquad+\frac{1}{M}\lim_{k\to\infty}\frac{1}{k}\log\frac{\left(kM\left(1-\sum\limits_{j=0}^{n-1}\alpha_j\right)\right)!\prod\limits_{j=0}^{n-1}\left(kM\alpha_j\right)!}{\left(kM\right)!}\\
&=-\left(1-\sum_{j=0}^{n-1}\alpha_j\right)\log\mu_n\left(P\right)-\sum_{j=0}^{n-1}\alpha_j\log\mu_j\left(P\right)\\
&\qquad\qquad\qquad+\left(1-\sum_{j=0}^{n-1}\alpha_j\right)\log\left(1-\sum_{j=0}^{n-1}\alpha_j\right)+\sum_{j=0}^{n-1}\alpha_j\log\alpha_j\\
&=\left(1-\sum_{j=0}^{n-1}\alpha_j\right)\log\frac{\left(1-\sum\limits_{j=0}^{n-1}\alpha_j\right)}{\mu_n\left(P\right)}+\sum_{j=0}^{n-1}\alpha_j\log\frac{\alpha_j}{\mu_j\left(P\right)}.
\end{aligned}\end{equation}
By Proposition \ref{cvx}, $c[\varphi_P]$ is convex and hence
continuous on $\Int\Delta(H)$. Therefore (\ref{cdense}) holds for
all $\alpha\in\Int\Delta(H)$. This conludes the proof of Theorem
\ref{CompCheby}.
\end{proof}

\begin{rmk}\label{cob}
The notation $\varphi_P$ depends on the choice of the coordinates
$Z$, as is implied by (\ref{cov3}) and (\ref{cov4}). Indeed, a
section $s$ of $mH$ can be expressed as a polynomial $f(Z)$ of
degree $m$. By definition, its norm at a point $p=[Z_0:\cdots:Z_n]$
is given by
$$
\left\|s\left(p\right)\right\|^2=\frac{\left|f\left(Z\right)\right|^2}{\left(\Ht{Z}PZ\right)^m}.
$$
Note that the right hand side is well-defined since both the
numerator and denominator are homogeneous of degree $2m$. Using a
new choice of coordinates $W$ given by $Z=AW$, the same section $s$
is expressed as the polynomial $g(W):=f(AW)$ in $W$, and its norm at
the same point $p$ is expressed as
$$
\left\|s\left(p\right)\right\|^2=\frac{\left|f\left(Z\right)\right|^2}{\left(\Ht{Z}PZ\right)^m}=\frac{\left|g\left(W\right)\right|^2}{\left(\Ht{W}\Ht{A}PAW\right)^m}.
$$
Therefore, after switching to the new coordinates, the same metric
has potential $\varphi_{\Ht{A}PA}$.
\end{rmk}

This will be used in the proof of Proposition \ref{geoddecomplem}.

\section{Geodesics of Fubini--Study potentials}\label{Secgeod}

This section introduces Bergman geodesics and explains how to use
them to solve boundary value problems of the geodesic equation. As
an application, all geodesics in the spaces of Fubini--Study metrics
on $\PP^n$ are classified.

First recall the definition of a geodesic \cite[Proposition 3]{Don}.
\begin{dfn}
Let $(X^n,\omega)$ be a compact K\"ahler manifold. Define $S$ to be
the unit strip $\{\re s\in(0,1)\}$ and $\pi_2$ the projection map
$S\times X\to X$. A function $u:S\times X\to\R$ is called a geodesic
if $u(s,\cdot)$ is $\omega$-plurisubharmonic, is independent of $\im
s$, and
$$
\left(\pi_2^*\omega+\sqrt{-1}\partial\ol{\partial}u\right)^{n+1}=0.
$$
\end{dfn}

The boundary value problem of the geodesic equation has
\cite[Theorem 1.3]{Blo} a unique \cite[Theorem 3, Corollary 2]{Che}
continuous solution.
\begin{ptn}
For smooth boundary conditions $u_0$ and $u_1$, there is a unique
continuous geodesic $u$ connecting them, i.e., $u(i,\cdot)=u_i$,
$i=0,1$.
\end{ptn}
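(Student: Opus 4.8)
The plan is to convert the boundary value problem into a Dirichlet problem for the homogeneous complex Monge--Amp\`ere equation on a compact complex manifold with boundary, and then to assemble the known existence and uniqueness theory for such problems, recovering the $\im s$-independence of the solution from uniqueness. Since a geodesic is by definition independent of $\im s$, the holomorphic substitution $w=e^{s}$ identifies any candidate $u$ with a function $\wt u$ on $M:=\ol A\times X$, where $\ol A:=\{w\in\C:1\le|w|\le e\}$, invariant under the rotations $w\mapsto e^{\sqrt{-1}\theta}w$. Writing $\pi_2\colon M\to X$ for the projection and $\Omega:=\pi_2^*\omega$, a smooth, closed, semipositive $(1,1)$-form with $\Omega^{n+1}=0$ on the $(n+1)$-dimensional $M$, the condition $(\pi_2^*\omega+\sqrt{-1}\partial\ol\partial u)^{n+1}=0$ becomes the degenerate equation
\[
\bigl(\Omega+\sqrt{-1}\partial\ol\partial\,\wt u\bigr)^{n+1}=0 \ \ \text{on } M,\qquad \wt u|_{\partial M}=g,
\]
where $g\colon\partial M\to\R$ equals $u_0$ on $\{|w|=1\}\times X$ and $u_1$ on $\{|w|=e\}\times X$; being pulled back from $X$, the datum $g$ is rotation invariant.

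For existence I would quote Chen's resolution of the geodesic Dirichlet problem together with the a priori estimates in B\l ocki's streamlined form, \cite[Theorem 3]{Che}, \cite[Theorem 1.3]{Blo}: there is a function $\wt u\in C^{1,1}(M)$, in particular continuous up to $\partial M$, that is $\Omega$-psh and solves the displayed equation; the smoothness of $u_0,u_1$ and the compactness of $X$ are what furnish the sub- and super-barriers needed for boundary regularity. Pre-composing $\wt u$ with an arbitrary rotation of $\ol A$ produces another solution with the same (rotation-invariant) boundary datum $g$, so by the uniqueness step $\wt u$ is rotation invariant; it therefore descends to a function on $S\times X$ independent of $\im s$, which is a geodesic in the sense of the definition above and satisfies $u(i,\,\cdot\,)=u_i$, $i=0,1$.

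For uniqueness I would invoke the comparison/domination principle for the complex Monge--Amp\`ere operator acting on continuous $\Omega$-psh functions (Bedford--Taylor theory, in the form used by Chen \cite[Corollary 2]{Che}): a continuous solution of the homogeneous equation is \emph{maximal} among $\Omega$-psh functions with boundary values $\le g$, equivalently it coincides with the upper envelope
\[
\wt u^\star=\sup\Bigl\{\,v\ \Omega\text{-psh on }M\ :\ \limsup_{z\to\zeta}v(z)\le g(\zeta)\ \text{ for all }\zeta\in\partial M\,\Bigr\},
\]
which is manifestly independent of any choices. Hence any two continuous geodesics with the same prescribed boundary data $u_0,u_1$ agree after passing to $M$, and therefore coincide on $S\times X$.

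The genuine difficulty is entirely contained in the quoted papers: the $C^{1,1}$ a priori estimates up to the boundary for the \emph{degenerate} Monge--Amp\`ere equation, which is the technical heart of Chen's theorem and of B\l ocki's simplification. Since only continuity of the geodesic is needed here, a self-contained alternative for the existence half is Perron's method: take $\wt u$ to be the envelope $\wt u^\star$ directly, show it is continuous on $M$ by comparing it with explicit barriers built from $u_0$, $u_1$, and suitable functions of $\log|w|$, and verify $(\Omega+\sqrt{-1}\partial\ol\partial\,\wt u^\star)^{n+1}=0$ by a balayage argument. The one delicate point in that route is the continuity of $\wt u^\star$ at $\partial M$, which is exactly where the smoothness of the boundary data enters, through the lower barrier.
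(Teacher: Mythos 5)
Your proposal is correct and follows the same route as the paper, which gives no proof at all but simply cites B\l ocki \cite[Theorem 1.3]{Blo} for existence and Chen \cite[Theorem 3, Corollary 2]{Che} for uniqueness; you have merely spelled out the standard reduction to the Dirichlet problem on $\ol A\times X$, the rotation-invariance-from-uniqueness argument, and the maximality characterization that underlie those citations. Nothing is missing.
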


To obtain the solution, one approach is to use quantization. The
following is proven by Phong--Sturm \cite[Theorem 1]{PS} and
Berndtsson \cite[Theorem 6.1]{Ber}.
\begin{dfn}\label{Bgdef}
Let $X^n$ be a projective K\"ahler manifold with $L\to X$ an ample
line bundle. Given metrics with potentials $\varphi(0,\,\cdot\,)$
and $\varphi(1,\,\cdot\,)$ on $L$ with positive curvature, one can
equip $H^0(X,mL)$ with Hermitian metrics $H_{m,0}$ and $H_{m,1}$ as
follows,
\begin{equation}\label{Hmdef}
H_{m,j}\left(s,t\right)=\int_Xs\ol{t}e^{-m\varphi\left(j,\,\cdot\,\right)}\frac{\left(\sqrt{-1}\partial\ol{\partial}\varphi\left(j,\,\cdot\,\right)\right)^n}{n!},
\end{equation}
for $j=0,1$. Using simultaneous diagonalization, one can find
orthonormal bases $\{s_{m,j}\}$ and $\{e^{\lambda_j}s_{m,j}\}$ for
$H_{m,0}$ and $H_{m,1}$, respectively. Define the \textit{Bergman
geodesic}
\begin{equation}\label{Berggeo}
\varphi_m\left(t,\,\cdot\,\right):=\frac{1}{m}\log\sum_je^{\lambda_jt}\left|s_{m,j}\right|^2.
\end{equation}
\end{dfn}

\begin{thm}\label{Bg}
Notation as in Definition \ref{Bgdef}. The continuous geodesic
$\varphi$ connecting $\varphi(0,\,\cdot\,)$ and
$\varphi(1,\,\cdot\,)$ is given by
$$
\varphi=\lim_{m\to\infty}\varphi_m.
$$
\end{thm}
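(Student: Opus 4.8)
The plan is to prove convergence by sandwiching $\varphi_m$ between two bounds that both tend to $\varphi$, following Phong--Sturm \cite{PS} and Berndtsson \cite{Ber}. Throughout, recall that the continuous geodesic $\varphi$ is the upper envelope of the family of \emph{subgeodesics}, i.e. $\pi_2^*\omega$-psh functions $u$ on $S\times X$ that are independent of $\im s$ and satisfy $\limsup_{\re s\to j}u\le\varphi(j,\,\cdot\,)$ for $j=0,1$; this is the Perron--Bremermann description underlying the existence result quoted from \cite{Blo,Che}. Two maps organize the argument: the quantization $Hilb_m:\calH_L\to\mathcal{H}_m$ into the symmetric space $\mathcal{H}_m$ of Hermitian inner products on $H^0(X,mL)$, and its ``inverse'' $FS:\mathcal{H}_m\to\calH_L$, $FS(H):=\tfrac1m\log\sum_j|\sigma_j|^2$ for any $H$-orthonormal basis $\{\sigma_j\}$. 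With these, $\varphi_m(t,\,\cdot\,)=FS(H_{m,t})$, where $t\mapsto H_{m,t}$, defined by $H_{m,t}(s_{m,j},s_{m,k})=e^{-\lambda_j t}\delta_{jk}$, is precisely the geodesic in $\mathcal{H}_m$ joining $H_{m,0}=Hilb_m(\varphi(0,\,\cdot\,))$ and $H_{m,1}=Hilb_m(\varphi(1,\,\cdot\,))$.

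First I would verify that each $\varphi_m$ is a subgeodesic. Complexifying the time variable, set $\Phi_m(s,x):=\tfrac1m\log\sum_j\bigl|e^{\lambda_j s/2}s_{m,j}(x)\bigr|^2$, which depends only on $\re s$ and restricts to $\varphi_m(\re s,\,\cdot\,)$. Since each $e^{\lambda_j s/2}s_{m,j}$ is a holomorphic section of the pullback of $mL$ to $S\times X$, the function $\Phi_m$ is $\pi_2^*\omega$-psh; this is the finite-dimensional incarnation of Berndtsson's positivity \cite{Ber}. The endpoints $\varphi_m(0,\,\cdot\,)$ and $\varphi_m(1,\,\cdot\,)$ are the Bergman potentials $FS(H_{m,0})$ and $FS(H_{m,1})$, and by the Tian--Zelditch--Catlin--Bouche expansion of the Bergman density function they converge uniformly to $\varphi(0,\,\cdot\,)$ and $\varphi(1,\,\cdot\,)$; write $\epsilon_m:=\max_j\sup_X\bigl(\varphi_m(j,\,\cdot\,)-\varphi(j,\,\cdot\,)\bigr)^+\to0$. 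Because $\varphi_m$ is a subgeodesic with boundary values $\le\varphi(j,\,\cdot\,)+\epsilon_m$ and $\varphi$ is maximal among subgeodesics with boundary data $\varphi(j,\,\cdot\,)$, the comparison principle for $\pi_2^*\omega$-psh functions gives $\varphi_m\le\varphi+\epsilon_m$ on all of $S\times X$, hence $\limsup_{m}\varphi_m\le\varphi$.

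The opposite inequality is the crux. The key point is that $FS$ is order-reversing: if $H\preceq H'$ then the Bergman density satisfies $\sum_j|\sigma_j^{H'}|^2=\sup_{s\ne0}|s|^2/H'(s,s)\le\sum_j|\sigma_j^{H}|^2$, so $FS(H')\le FS(H)$. I would then show that the curve $t\mapsto Hilb_m(\varphi(t,\,\cdot\,))$ of Hilbert metrics of the \emph{true} geodesic lies above the $\mathcal{H}_m$-geodesic with the same endpoints, i.e. $Hilb_m(\varphi(t,\,\cdot\,))\succeq H_{m,t}$. This is exactly Berndtsson's theorem \cite{Ber}: the subgeodesic property of $\varphi$ makes the Hermitian bundle $S\times H^0(X,mL)$ with fiber metric $Hilb_m(\varphi(\re s,\,\cdot\,))$ Nakano-semipositive, which in the nonpositively curved symmetric space $\mathcal{H}_m$ forces $t\mapsto Hilb_m(\varphi(t,\,\cdot\,))$ to be geodesically convex and thus to dominate the straight geodesic $H_{m,t}$ between its endpoints. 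Applying the order-reversing $FS$ yields $\varphi_m(t,\,\cdot\,)=FS(H_{m,t})\ge FS(Hilb_m(\varphi(t,\,\cdot\,)))$, and by Tian's expansion $FS(Hilb_m(\varphi(t,\,\cdot\,)))\to\varphi(t,\,\cdot\,)$, giving $\liminf_m\varphi_m\ge\varphi$.

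Finally I would combine the two one-sided estimates into uniform convergence. The $t$-derivative $\dot\varphi_m$ is the weighted average $\tfrac1m\sum_j\lambda_j e^{\lambda_j t}|s_{m,j}|^2/\sum_j e^{\lambda_j t}|s_{m,j}|^2$, bounded by $\max_j|\lambda_j|/m$, which stays bounded since the $\lambda_j/m$ form the $\tfrac1m\log$-relative spectrum of $Hilb_m(\varphi(0,\,\cdot\,))$ and $Hilb_m(\varphi(1,\,\cdot\,))$; together with the $\omega$-psh bound in $x$ this makes $\{\varphi_m\}$ equicontinuous, so the matching $\limsup$ and $\liminf$ promote to $\varphi_m\to\varphi$ uniformly. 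The main obstacle is the lower bound: one must control $Hilb_m(\varphi(t,\,\cdot\,))\succeq H_{m,t}$ and the convergence $FS(Hilb_m(\varphi(t,\,\cdot\,)))\to\varphi(t,\,\cdot\,)$ uniformly in $t$, despite the geodesic $\varphi$ being only $C^{1,1}$; this is where Berndtsson's positivity of direct images must be married to the quantitative Bergman-kernel asymptotics valid at that reduced regularity, in the spirit of Berman--Boucksom \cite{BBInv}.
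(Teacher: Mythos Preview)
The paper does not give its own proof of this theorem: it is stated with the attribution ``The following is proven by Phong--Sturm \cite[Theorem~1]{PS} and Berndtsson \cite[Theorem~6.1]{Ber}'' and then used as a black box in the proof of Lemma~\ref{geodexm}. So there is nothing in the paper to compare your proposal against; you have attempted to reconstruct the cited external proofs rather than anything the authors wrote.

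That said, your sketch is a fair outline of Berndtsson's approach---subgeodesic upper bound via the comparison principle, lower bound via Nakano semipositivity of the direct image making $t\mapsto Hilb_m(\varphi(t,\cdot))$ geodesically convex in $\mathcal{H}_m$, then order-reversal under $FS$. You also correctly flag the genuine technical obstacle: Tian--Zelditch--Catlin asymptotics require smooth potentials, while the geodesic $\varphi$ is only $C^{1,1}$, so the step $FS(Hilb_m(\varphi(t,\cdot)))\to\varphi(t,\cdot)$ uniformly in $t$ needs the lower-regularity Bergman estimates you allude to. One further wrinkle you gloss over is that the paper's $Hilb_m$ in \eqref{Hmdef} uses the volume form $(\sqrt{-1}\,\partial\bar\partial\varphi(j,\cdot))^n$ of the metric itself, not a fixed reference measure, so along the $C^{1,1}$ geodesic this Monge--Amp\`ere measure may degenerate and the convexity/comparison argument must be set up with care (Berndtsson in fact works with a fixed auxiliary volume form and absorbs the discrepancy into the $1/m$ asymptotics).
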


\begin{lem}\label{geodexm}
Notation as in Theorem \ref{CompCheby}. For a real diagonal matrix
$D=\diag\{d_0,\ldots,d_n\}$, $\varphi_{e^{tD}}$ is a geodesic.
\end{lem}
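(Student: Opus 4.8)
The plan is to show that $\varphi_{e^{tD}}$ coincides with a Bergman geodesic of the type \eqref{Berggeo}, and then invoke Theorem \ref{Bg} together with the fact that a curve which is simultaneously a Bergman geodesic at every level $m$ (up to the correct time-reparametrization) must be \emph{the} geodesic, since its limit is itself. Concretely, I would take $L=H$, $\varphi(0,\,\cdot\,)=\varphi_{\mathrm{Id}}=\log\sum_{i=0}^n|z_i|^2$ and $\varphi(1,\,\cdot\,)=\varphi_{e^{D}}=\log\sum_{i=0}^n e^{d_i}|z_i|^2$, with $z=(z_0,\dots,z_{n-1},1)$. The endpoints $\varphi(0,\,\cdot\,)$ and $\varphi(1,\,\cdot\,)$ are smooth Fubini--Study potentials, so Theorem \ref{Bg} applies and produces the unique continuous geodesic as $\lim_{m\to\infty}\varphi_m$.

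The key computation is to identify the Bergman geodesic $\varphi_m$ explicitly. First I would compute the two Hilbert inner products $H_{m,0}$ and $H_{m,1}$ on $H^0(\PP^n,mH)$ in the monomial basis $\{z^I\}_{|I|\le m}$ (extended to $z^I$ with $|I|=m$ by the usual homogenization $z_n=1$). Using the volume-form computations already carried out in Section \ref{Chebycomp} — specifically the norm formula $\|t^I\|^2 = I!/((m+n)!\prod_j\mu_j^{I_j+1})$ applied with $P=\mathrm{Id}$ and $P=e^{D}$ (both diagonal, so $L=\mathrm{Id}$ and $t^I=z^I$, $\mu_j=1$ resp.\ $\mu_j=e^{d_j}$) — the monomials $z^I$ are already \emph{orthogonal} for both inner products, with $H_{m,0}(z^I,z^I)$ proportional to $I!/(m+n)!$ and $H_{m,1}(z^I,z^I)$ proportional to $e^{-\sum_j d_j I_j}I!/(m+n)!$ (note one must be careful that the background volume forms in \eqref{Hmdef} differ from the fixed $\mu$ of Section \ref{Chebycomp} only by a positive smooth factor that does not affect leading-order asymptotics, or simply redo the elementary Gaussian-type integral directly). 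Hence the common orthogonal basis is $\{z^I\}$, and after normalizing, the "eigenvalues" are $e^{\lambda_I}$ with $\lambda_I = -\sum_{j=0}^n d_j I_j$ (up to an $m$-independent additive constant absorbed by normalization, and with $I_n = m-\sum_{j<n}I_j$). Plugging into \eqref{Berggeo},
$$
\varphi_m(t,\,\cdot\,) = \frac{1}{m}\log\sum_{|I|=m} c_{m,I}\, e^{-t\sum_j d_j I_j}\,|z^I|^2,
$$
where $c_{m,I}>0$ are the normalizing constants.

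To finish, I would recognize the right-hand side as a multinomial expansion: up to the constants $c_{m,I}$ (whose logarithms are $o(m)$, by Stirling as in Corollary \ref{stlquo}), $\sum_{|I|=m}\binom{m}{I} e^{-t\sum_j d_j I_j}|z^I|^2 = \big(\sum_{j=0}^n e^{-td_j}|z_j|^2\big)^m$, so $\varphi_m(t,\,\cdot\,) \to \log\sum_{j=0}^n e^{-td_j}|z_j|^2 = \varphi_{e^{-tD}}$ as $m\to\infty$. Reparametrizing $t\mapsto -t$ (equivalently running the geodesic from $\varphi_{e^{D}}$ to $\varphi_{\mathrm{Id}}$, or replacing $D$ by $-D$), we get that $\varphi_{e^{tD}}$ is the limit of Bergman geodesics and hence, by Theorem \ref{Bg}, is the continuous geodesic. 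The main obstacle I anticipate is purely bookkeeping: matching the normalizing constants $c_{m,I}$ to genuine binomial coefficients and verifying that the discrepancies (including the mismatch between the volume forms in \eqref{Hmdef} and the auxiliary $\mu$, and the lower-order terms in Stirling) are uniformly $o(m)$ so that they vanish in the $\frac{1}{m}\log(\cdot)$ limit; once that is controlled, the multinomial collapse is immediate. Alternatively, and perhaps more cleanly, one can bypass the asymptotics entirely: observe that $\varphi_{e^{tD}}=\log\sum_j e^{td_j}|z_j|^2$ is already of the exact form \eqref{Berggeo} with $m=1$, $s_{1,j}=z_j$ the orthonormal basis for $H_{1,0}$ (after the harmless normalization), and $\lambda_j=d_j$ — so $\varphi_{e^{tD}}$ \emph{is} the Bergman geodesic $\varphi_1$, and in fact checking that each $\varphi_m$ equals $\varphi_{e^{tD}}$ on the nose via the multinomial identity shows the sequence $\varphi_m$ is constant in $m$, whence its limit is $\varphi_{e^{tD}}$; this is the cleanest route and I would present it first, reserving the Stirling estimate only in case the constants $c_{m,I}$ fail to be exactly binomial.
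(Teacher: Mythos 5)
Your overall strategy is exactly the paper's: compute $H_{m,0}$ and $H_{m,1}$ in the monomial basis, observe that the monomials are simultaneously orthogonal, identify the Bergman geodesic via the multinomial theorem, and pass to the limit using Theorem \ref{Bg}. The bookkeeping you worry about is in fact trivial: since $B(I_0+1,\dots,I_n+1)=I!/(m+n)!$, the normalizing constants are exactly $\frac{(m+n)!}{m!}$ times multinomial coefficients, the sum collapses exactly to $\bigl(\sum_je^{d_jt}|z_j|^2\bigr)^m$, and the only discrepancy between $\varphi_m$ and $\varphi_{e^{tD}}$ is the additive constant $\frac{1}{m}\log\frac{(m+n)!}{m!}-\frac{n}{m}\log 2\pi\to0$; no Stirling estimate is needed. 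For the same reason, your ``cleanest route'' is slightly off: the $\varphi_m$ are \emph{not} equal to $\varphi_{e^{tD}}$ on the nose (they differ by that vanishing constant), and the $m=1$ observation alone proves nothing, since Theorem \ref{Bg} identifies only the limit $m\to\infty$ with the geodesic, not any individual $\varphi_m$.

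The one genuine error is the sign of $\lambda_I$. From $H_{m,1}(z^I,z^I)=e^{-\sum_jd_jI_j}H_{m,0}(z^I,z^I)$ it follows that the $H_{m,1}$-orthonormal basis is $\{e^{+\frac{1}{2}\sum_jd_jI_j}s_{m,I}\}$, so the exponential weight entering the Bergman geodesic must be $e^{+t\sum_jd_jI_j}$ --- this is forced by the requirement that at $t=1$ one recovers the Bergman kernel of $H_{m,1}$, namely $\sum_Ie^{+\sum_jd_jI_j}|z^I|^2/\|z^I\|^2_{H_{m,0}}$ --- and the limit is then $\varphi_{e^{tD}}$ directly. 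Your answer $\varphi_{e^{-tD}}$ already violates the boundary condition at $t=1$ (it gives $\varphi_{e^{-D}}$ instead of $\varphi_{e^{D}}$), which is the internal check that should have flagged the slip, and the proposed patch of reparametrizing $t\mapsto-t$ or replacing $D$ by $-D$ does not repair it: it merely transfers the mismatch to the other endpoint. Correct the sign at the source and your argument is complete and coincides with the paper's proof.
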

\begin{proof}
It suffices to compute the geodesic connecting
$\varphi_I=\log\Ht{z}z$ and $\varphi_{e^D}=\log\Ht{z}e^Dz$.

First note that the linear space $H^0(\PP^n,mH)$ is spanned by
monomials in $s_0=z_0,\ldots,s_{n-1}=z_{n-1},s_n=1$ of degree $m$.
By (\ref{Hmdef}), for multi-indices $I$ and $J$ with $|I|=|J|=m$,
$$\begin{aligned}
H_{m,0}\left(s^I,s^J\right)&=\int_X\frac{z^I\bar{z}^J}{\left\|z\right\|^{2m}}\cdot\frac{\sqrt{-1}^ndz_0\wedge d\ol{z}_0\wedge\ldots\wedge dz_{n-1}\wedge d\ol{z}_{n-1}}{\left\|z\right\|^{2n+2}}\\
&=2^n\int_{\R_+^n}\frac{r^{I+J+1}}{\left(1+\sum\limits_{i=0}^{n-1}r_i^2\right)^{n+m+1}}dr\int_{[0,2\pi)^n}e^{i\left(I-J\right)\theta}d\theta\\
&=\left(2\pi\right)^n\delta_{IJ}B\left(I_0+1,\ldots,I_n+1\right).
\end{aligned}$$
Similarly, applying a change of variable $W_j=e^\frac{d_j}{2}Z_j$
one has
$$
H_{m,1}\left(\prod_j\left(e^\frac{d_j}{2}s_j\right)^{I_j},\prod_j\left(e^\frac{d_j}{2}s_j\right)^{J_j}\right)=\left(2\pi\right)^n\delta_{IJ}B\left(I_0+1,\ldots,I_n+1\right).
$$
Define
$$
s_{m,I}:=\frac{z^I}{\sqrt{\left(2\pi\right)^nB\left(I_0+1,\ldots,I_n+1\right)}}.
$$
Then $\{s_{m,I}\}$ and
$\{e^{\frac{1}{2}\sum\limits_jI_jd_j}s_{m,I}\}$ are orthonormal
bases for $H_{m,0}$ and $H_{m,1}$, respectively. By (\ref{Berggeo}),
the Bergman geodesic
$$\begin{aligned}
\varphi_m\left(t,z\right)&=\frac{1}{m}\log\sum_{\left|I\right|=m}e^{\sum\limits_jI_jd_jt}\cdot\frac{\left|z\right|^{2I}}{\left(2\pi\right)^nB\left(I_0+1,\ldots,I_n+1\right)}\\
&=\frac{1}{m}\log\sum_{\left|I\right|=m}\frac{\prod\limits_j\left(e^{d_jt}\left|z_j\right|^2\right)^{I_j}}{B\left(I_0+1,\ldots,I_n+1\right)}-\frac{n}{m}\log2\pi\\
&=\frac{1}{m}\log\sum_{\left|I\right|=m}\frac{m!}{I!}\prod_j\left(e^{d_jt}\left|z_j\right|^2\right)^{I_j}+\frac{1}{m}\log\frac{\left(n+m\right)!}{m!}-\frac{n}{m}\log2\pi\\
&=\frac{1}{m}\log\left(\sum_je^{d_jt}\left|z_j\right|^2\right)^m+\frac{1}{m}\log\frac{\left(n+m\right)!}{m!}-\frac{n}{m}\log2\pi\\
&=\varphi_{e^{tD}}\left(z\right)+\frac{1}{m}\log\frac{\left(n+m\right)!}{m!}-\frac{n}{m}\log2\pi.
\end{aligned}$$
Using Theorem \ref{Bg}, the geodesic
$$
\varphi=\lim_{m\to\infty}\varphi_m\left(t,\,\cdot,\,\right)=\varphi_{e^{tD}}.
$$
\end{proof}


\begin{ptn}\label{geoddecomplem}
For matrices $A\in GL(n+1,\C)$ and $D$ real diagonal,
$\varphi_{P(t)}$ is a geodesic, where
$$
P\left(t\right):=\Ht{A}e^{tD}A.
$$
\end{ptn}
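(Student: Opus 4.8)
The plan is to reduce the general statement to the diagonal case already handled in Lemma \ref{geodexm} by exploiting the coordinate-change formula recorded in Remark \ref{cob}. First I would observe that $P(0) = \Ht{A}A$ is positive definite, and more generally each $P(t) = \Ht{A}e^{tD}A$ is positive definite since $e^{tD}$ is positive definite and $A$ is invertible; hence $\varphi_{P(t)} \in \calH_L$ is a genuine curve of Fubini--Study potentials, and it makes sense to ask whether it is a geodesic. The key point is that being a geodesic is a property of the curve of \emph{metrics} on $H$, not of the particular coordinate representation: by Remark \ref{cob}, if $Z = AW$ is a change of homogeneous coordinates, then the metric with potential $\varphi_{Q}$ in the $Z$-coordinates has potential $\varphi_{\Ht{A}QA}$ in the $W$-coordinates. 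Applying this with $Q = e^{tD}$, the curve of metrics whose potentials in the $W$-coordinates are $\varphi_{e^{tD}}$ has, in the $Z$-coordinates, potentials $\varphi_{\Ht{A}e^{tD}A} = \varphi_{P(t)}$.

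Next I would invoke Lemma \ref{geodexm}, which asserts precisely that $t \mapsto \varphi_{e^{tD}}$ is a geodesic for the real diagonal matrix $D$. Since the geodesic equation $(\pi_2^*\omega + \sqrt{-1}\partial\ol\partial u)^{n+1} = 0$ is intrinsic — it is an equation on the total space $S \times X$ for the global $(1,1)$-current attached to the curve of metrics on $H$, and is manifestly invariant under a biholomorphism of $X$ (here the projective linear automorphism induced by $A \in GL(n+1,\C)$), combined with the trivial $\im s$-independence and the $\omega$-psh property, all of which transform correctly — the curve $t \mapsto \varphi_{P(t)}$ obtained by pulling back along this automorphism is again a geodesic. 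Concretely: if $u(s,w)$ denotes the geodesic of Lemma \ref{geodexm} in the $W$-chart (with $\re s = t$), then $u(s, A^{-1}\cdot)$ solves the same degenerate Monge--Amp\`ere equation in the $Z$-chart, its restriction to $\re s = t$ has potential $\varphi_{P(t)}$, and its boundary values at $\re s = 0, 1$ are $\varphi_{\Ht{A}A}$ and $\varphi_{\Ht{A}e^{D}A}$; uniqueness of the geodesic with these endpoints then identifies it with $\varphi_{P(t)}$.

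The only genuine subtlety — and the step I would be most careful about — is making the "coordinate invariance of the geodesic equation" precise, since Remark \ref{cob} is phrased at the level of potentials $\varphi_P$ on a single chart rather than at the level of the complex Monge--Amp\`ere equation on $S \times X$. What must be checked is that $A \in GL(n+1,\C)$ induces an honest biholomorphism $\sigma_A \colon \PP^n \to \PP^n$, that $\sigma_A^* c_1(H) = c_1(H)$ so that $\omega$ can be taken $\sigma_A$-invariant up to the usual potential-level adjustment, and that the pullback of an $\omega$-psh, $\im s$-independent solution of the homogeneous complex Monge--Amp\`ere equation is again such a solution. Once this is in place the argument is immediate; alternatively, one can sidestep the geometry entirely by repeating the Bergman-geodesic computation of Lemma \ref{geodexm} verbatim after the change of variables $Z = AW$ in the integrals defining $H_{m,0}, H_{m,1}$, which reduces the Gram matrices to exactly the diagonal case and yields $\varphi_m(t,\cdot) = \varphi_{P(t)} + \tfrac1m\log\tfrac{(n+m)!}{m!} - \tfrac{n}{m}\log(2\pi \det(\Ht{A}A)^{?})$ up to an additive constant in $t$, so that $\lim_{m\to\infty}\varphi_m = \varphi_{P(t)}$ by Theorem \ref{Bg}. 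I would present the coordinate-invariance argument as the main line and remark that the direct computation is an alternative.
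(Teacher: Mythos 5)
Your proposal is correct and follows the paper's own route: the paper's proof is exactly the one-line reduction ``Lemma \ref{geodexm} plus Remark \ref{cob} via the change of variable $W=AZ$,'' and you have simply spelled out the coordinate-invariance of the geodesic equation that this reduction tacitly uses. The alternative direct Bergman-geodesic computation you sketch is a fine fallback but is not needed.
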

\begin{proof}
This follows from Lemma \ref{geodexm} and Remark \ref{cob} by a
change of variable $W=AZ$.
\end{proof}

\begin{proof}[Proof of Proposition \ref{introgeo}]
``$\Rightarrow$'': Let $\varphi_{P(t)}$ be a geodesic. By
simultaneously diagonalizing $P(0)$ and $P(1)$, one can write
$P(0)=\Ht{A}A$ and $P(1)=\Ht{A}e^DA$ for some $A\in GL(n+1,\C)$ and
$D$ real diagonal. Define
$$
\wt{P}\left(t\right):=\Ht{A}e^{tD}A.
$$
By Proposition \ref{geoddecomplem}, $\varphi_{\wt{P}(t)}$ is a
geodesic connecting $P(0)$ and $P(1)$. Hence by uniqueness
$\varphi_{\wt{P}(t)}=\varphi_{P(t)}$.

``$\Leftarrow$'': This is Proposition \ref{geoddecomplem}.
\end{proof}

\section{A counterexample}\label{Seccount}

This section proves Corollary \ref{introaff}.

\begin{lem}\label{linedecomp}
For a geodesic $\varphi_{P(t)}$, the following are equivalent
\begin{enumerate}[1)]
    \item\label{muexp}$\mu_i(P(t))=e^{k_i t}$, $k_i\in\R$;
    \item\label{lunidecomp}There is a lower unitriangular matrix $L$ such that $P(t)=\Ht{L}e^{Kt}L$, where $K=\diag\{k_0,\ldots,k_n\}$, $k_i\in\R$.
\end{enumerate}
Moreover, the decomposition $P(t)=\Ht{L}e^{Kt}L$ is unique.
\end{lem}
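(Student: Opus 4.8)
The plan is to prove Lemma \ref{linedecomp} by establishing both implications and then uniqueness. For the implication \ref{lunidecomp} $\Rightarrow$ \ref{muexp}: if $P(t)=\Ht{L}e^{Kt}L$ with $L$ lower unitriangular and $K=\diag\{k_0,\ldots,k_n\}$, then since $e^{Kt}$ is diagonal with entries $e^{k_it}$, Claim \ref{mu} gives directly $\mu_i(P(t))=\mu_i(\Ht{L}e^{Kt}L)=\mu_i(e^{Kt})=e^{k_it}$ because $L$ is unitriangular (so $|l_{ii}|^2=1$) and $\mu_i$ of a diagonal matrix is its $i$-th diagonal entry. This direction is essentially immediate from Claim \ref{mu}.

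For the main implication \ref{muexp} $\Rightarrow$ \ref{lunidecomp}: since $\varphi_{P(t)}$ is a geodesic, Proposition \ref{introgeo} provides $A\in GL(n+1,\C)$ and a real diagonal $D$ with $P(t)=\Ht{A}e^{tD}A$. The idea is to perform an LU-type (Cholesky-flavored) factorization of $A$ adapted to the flag order: write $A = U\Ht{L'}$ or, more precisely, use that any $A\in GL(n+1,\C)$ factors as a product of a lower unitriangular matrix, a diagonal matrix, and a matrix that is unitary with respect to the relevant Hermitian structure — but the cleanest route is to diagonalize $P(0)=\Ht{A}A$ in a flag-compatible way. Concretely, apply the (block) LDL$^*$/Cholesky algorithm to the positive definite matrix $P(0)$: there is a unique lower unitriangular $L$ and positive diagonal $D_0=\diag\{\mu_0(P(0)),\ldots,\mu_n(P(0))\}$ (the identification of the diagonal entries is exactly Claim \ref{mu}, taking $P=P(0)$, $\Ht{L}PL = D_0$... more carefully, $P(0)=\Ht{L}D_0L$). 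Now consider $Q(t):=\Ht{L^{-1}}P(t)L^{-1}$; this is again of the form $\Ht{(AL^{-1})}e^{tD}(AL^{-1})$, hence $\varphi_{Q(t)}$ is a geodesic, $Q(0)=D_0$ is diagonal, and by Claim \ref{mu} again $\mu_i(Q(t))=\mu_i(P(t))/|l_{ii}|^2=\mu_i(P(t))=e^{k_it}$. So it suffices to treat the case $P(0)$ diagonal; replacing $P(t)$ by $D_0^{-1/2}P(t)D_0^{-1/2}$ (a diagonal conjugation, which scales each $\mu_i$ by a positive constant and preserves the geodesic and diagonal-at-$0$ properties, see Remark \ref{cob}) reduces to $P(0)=\mathrm{Id}$. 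Then $P(t)=\Ht{A}e^{tD}A$ with $\Ht{A}A=\mathrm{Id}$, i.e. $A$ unitary, so $P(t)=\Ht{A}e^{tD}A$ with $A\in U(n+1)$. The constraint $\mu_i(P(t))=e^{k_it}$ for all $t$ and all $i$ must then force $A$ to be (block-)diagonal relative to the eigenspaces of $D$ in a way compatible with the flag; I would extract this by differentiating $\log\mu_i(P(t))$ at $t=0$ and using the logarithmic-derivative/eigenvalue-perturbation structure of the principal-minor quotients, together with convexity from Proposition \ref{cvx}, to conclude $A$ commutes with the relevant diagonal projections, whence $P(t)=e^{tK}$ for $K=\ol{A}^\mathsf{T}DA$ diagonal with entries $k_i$; undoing the two reductions gives $P(t)=\Ht{L}e^{tK}L$.

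For uniqueness of the decomposition $P(t)=\Ht{L}e^{Kt}L$: evaluating at $t=0$ gives $P(0)=\Ht{L}L$, and the Cholesky factorization of a positive definite Hermitian matrix into $\Ht{L}L$ with $L$ lower unitriangular times... one must be slightly careful since $\Ht{L}L$ with $L$ merely unitriangular need not be the standard Cholesky form — but $\Ht{L}e^{Kt}L$ at $t=0$ is $\Ht{L}L$, and differentiating at $t=0$ gives $P'(0)=\Ht{L}KL$, so $L^{-\mathsf{T}}\ldots$; more cleanly, $P(0)^{-1}P'(0) $ conjugated appropriately recovers $K$ and then $L$, and the uniqueness of the LU decomposition of $L^{-1}P(0)L^{-\mathsf{*}}$... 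I would instead argue: if $\Ht{L_1}e^{Kt}L_1=\Ht{L_2}e^{Kt}L_2$ for all $t$ then $M:=L_2L_1^{-1}$ satisfies $\Ht{M}e^{Kt}M = e^{Kt}$ for all $t$; taking $t=0$, $M$ is unitary; differentiating, $M$ commutes with $K$; a unitary lower unitriangular matrix is the identity (its rows are unit vectors with a $1$ on the diagonal and zeros after, forcing all off-diagonal entries to vanish), and since $M=L_2L_1^{-1}$ is lower unitriangular (product of such), $M=\mathrm{Id}$, so $L_1=L_2$; the $K$'s then agree automatically.

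The main obstacle I expect is the step in the second paragraph forcing $A$ (after reducing to $P(0)=\mathrm{Id}$, $A$ unitary) to be essentially block-diagonal along $D$'s eigenspaces in a flag-compatible manner: the condition is that \emph{every} principal-minor quotient $\mu_i$ of $\Ht{A}e^{tD}A$ is a pure exponential $e^{k_it}$, and one must show this rigidly constrains $A$. I would handle it by induction on $n$, peeling off the bottom-right $1\times 1$ corner: $\mu_n(P(t))=p_{nn}(t)=\Ht{a}e^{tD}a$ where $a$ is the last column of $A$; this is a sum of positive exponentials $\sum_j|a_{jn}|^2e^{d_jt}$, and such a sum equals a single exponential $e^{k_nt}$ only if $a$ is supported on a single eigenvalue of $D$, i.e. $a_{jn}=0$ unless $d_j$ equals that value. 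Feeding this back (using Claim \ref{mured} to strip the last coordinate after an appropriate unitriangular/diagonal reduction) lowers the dimension and completes the induction. This inductive bookkeeping — tracking how the flag, the unitriangular factor, and the eigenvalue-blocks of $D$ interact at each stage — is the technical heart of the argument.
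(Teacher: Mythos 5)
Your proposal is correct and follows essentially the same route as the paper: the key rigidity (a positive combination of exponentials $\sum_j|a_{jn}|^2e^{d_jt}$ can equal a single exponential $e^{k_nt}$ only if it is supported on one eigenvalue of $D$, applied to $\mu_n(P(t))=p_{nn}(t)$), followed by an induction that strips the last coordinate via Claims \ref{mu} and \ref{mured}, and the same uniqueness argument via the fact that a unitary lower unitriangular matrix is the identity. Your preliminary normalization to $P(0)=\mathrm{Id}$ is harmless (and even makes the stripped factor automatically block diagonal, since a unitary matrix with last column $e_n$ has last row $e_n^{\mathsf{T}}$), while the mid-proposal detour through eigenvalue perturbation and Proposition \ref{cvx} (which concerns convexity in $\alpha$, not in $t$) is unnecessary and rightly abandoned in favor of the induction you describe at the end.
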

\begin{proof}
\ref{muexp})$\Rightarrow$\ref{lunidecomp}): One proceeds by
induction on the size of $P$. When $P$ has order 1, the matrix $L$
is 1.

Suppose the statement is true for matrices of order $n$. Let $P(t)$
be of order $n+1$ with $\mu_i(P(t))=e^{k_i t}$. By Proposition
\ref{introgeo}, $P(t)=\Ht{A}e^{tD}A$. Define
$$
I:=\{i\,|\,d_i=k_n\}.
$$
By Definition \ref{defetamu}, for any $P\in\mathcal{P}_{n+1}(\C)$,
$\mu_n(P)=p_{nn}$. Thus
$$
\mu_n\left(P\left(t\right)\right)=\mu_n\left(\Ht{A}e^{tD}A\right)=\sum_{i=0}^n\left|a_{in}\right|^2e^{d_it}.
$$
Therefore,
$$
\sum_{i=0}^n\left|a_{in}\right|^2e^{d_it}=e^{k_nt}.
$$
Since this is true for all $t$, it follows that $a_{in}=0$ for
$i\notin I$, and
$$
\sum_{i\in I}\left|a_{in}\right|^2=1.
$$
In particular, $I\neq\varnothing$. By reordering the rows and
columns one may assume $I=\{m,m+1,\ldots,n\}$. This means the vector
$v_n=(a_{mn},\ldots,a_{nn})^\mathsf{T}\in\C^{n+1-m}$ has unit
length, so it can be completed to an orthonormal basis
$v_m,\ldots,v_n$. Define a unitary matrix
$$
U:=\begin{pmatrix}
I_m\\
&V
\end{pmatrix}
$$
where
$$
V:=\begin{pmatrix}
\Ht{v_m}\\
\vdots\\
\Ht{v_n}
\end{pmatrix}.
$$
Since $d_m=\cdots=d_n=k_n$, one has $e^{tD}=\Ht{U}e^{tD}U$, and
$$
P\left(t\right)=\Ht{A}e^{tD}A=\Ht{A}\Ht{U}e^{tD}UA.
$$

Write
$$
B:=UA.
$$
Recall that $a_{in}=0$ for $0\leq i<m$, and the vectors
$v_m,\ldots,v_n\in\C^{n+1-m}$ are orthonormal. Therefore for $0\leq
i<m$,
$$
b_{in}=a_{in}=0;
$$
for $m\leq i<n$,
$$
b_{in}=\Ht{v_i}v_n=0;
$$
and for $i<n$,
$$
b_{nn}=\Ht{v_n}v_n=1.
$$
That is,
$$
B=\begin{pmatrix}
\wt{B}&0\\
w&1
\end{pmatrix},
$$
where $\wt{B}\in\mathcal{P}_n(\C)$ and $w\in\C^n$. Write
$$
\wt{D}:=\diag\{d_0,\ldots,d_{n-1}\}.
$$
One then has the decomposition
$$\begin{aligned}
P\left(t\right)&=\begin{pmatrix}
\Ht{\wt{B}}&\Ht{w}\\
0&1
\end{pmatrix}\begin{pmatrix}
e^{\wt{D}t}\\
&e^{k_nt}
\end{pmatrix}\begin{pmatrix}
\wt{B}&0\\
w&1
\end{pmatrix}\\
&=\begin{pmatrix}
I_n&\Ht{w}\\
0&1
\end{pmatrix}\begin{pmatrix}
\Ht{\wt{B}}e^{\wt{D}t}\wt{B}\\
&e^{k_nt}
\end{pmatrix}\begin{pmatrix}
I_n&0\\
w&1
\end{pmatrix}.
\end{aligned}$$
By Claims \ref{mu} and \ref{mured}, for $0\leq i\leq n-1$,
$$
\mu_i\left(P\left(t\right)\right)=\mu_i\begin{pmatrix}
\Ht{\wt{B}}e^{\wt{D}t}\wt{B}\\
&e^{k_nt}
\end{pmatrix}=\mu_i\left(\Ht{\wt{B}}e^{\wt{D}t}\wt{B}\right).
$$
By the induction hypothesis, one can write
$\Ht{\wt{B}}e^{\wt{D}t}\wt{B}=\Ht{\wt{L}}e^{\wt{K}t}\wt{L}$, where
$\wt{L}$ is lower unitriangular and $\wt{K}$ is real diagonal. Then,
$$\begin{aligned}
P\left(t\right)&=\begin{pmatrix}
I_n&\Ht{w}\\
0&1
\end{pmatrix}\begin{pmatrix}
\Ht{\wt{B}}e^{\wt{D}t}\wt{B}\\
&e^{k_nt}
\end{pmatrix}\begin{pmatrix}
I_n&0\\
w&1
\end{pmatrix}\\
&=\begin{pmatrix}
I_n&\Ht{w}\\
0&1
\end{pmatrix}\begin{pmatrix}
\Ht{\wt{L}}e^{\wt{K}t}\wt{L}\\
&e^{k_nt}
\end{pmatrix}\begin{pmatrix}
I_n&0\\
w&1
\end{pmatrix}\\
&=\begin{pmatrix}
\Ht{\wt{L}}&\Ht{w}\\
0&1
\end{pmatrix}\begin{pmatrix}
e^{\wt{K}t}\\
&e^{k_nt}
\end{pmatrix}\begin{pmatrix}
\wt{L}&0\\
w&1
\end{pmatrix}\\
&=\Ht{L}e^{Kt}L,
\end{aligned}$$
where
$$
L=\begin{pmatrix}
\wt{L}&0\\
w&1
\end{pmatrix}
$$
is lower unitriangular and $K=\diag\{\wt{K},k_n\}$ is real diagonal.

\ref{lunidecomp})$\Rightarrow$\ref{muexp}): This follows directly
from Claim \ref{mu}.

For uniqueness, first notice that $K$ is determined by $P(t)$ under
the relation $\mu_i(P(t))=e^{k_i t}$. Suppose
$\Ht{L}e^{Kt}L=\Ht{L'}e^{Kt}L'$ for some lower unitriangular
matrices $L$ and $L'$. That is,
$L'L^{-1}=e^{-Kt}\Ht{LL'^{-1}}e^{Kt}$. Since $L'L^{-1}$ is lower
unitriangular and $e^{-Kt}\Ht{LL'^{-1}}e^{Kt}$ is upper triangular,
they are equal only when they both are identity matrices, which
means $L=L'$.
\end{proof}

\begin{ptn}\label{affdecomp}
For a geodesic $\varphi_{P(t)}$, the following are equivalent
\begin{enumerate}[1)]
    \item$\mu_i(P(t))=a_ie^{k_i t}$, where $a_i>0$ and $k_i\in\R$;
    \item There is a lower triangular matrix $L$ such that $P(t)=\Ht{L}e^{Kt}L$, where $l_{ii}=\sqrt{a_i}$ and $K=\diag\{k_0,\ldots,k_n\}$, $k_i\in\R$.
\end{enumerate}
Moreover, such a decomposition is unique.
\end{ptn}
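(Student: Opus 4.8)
The plan is to reduce Proposition \ref{affdecomp} to Lemma \ref{linedecomp} by conjugating $P(t)$ with a single fixed real diagonal matrix that absorbs the constants $a_i$. Set $\Lambda:=\diag\{\sqrt{a_0},\ldots,\sqrt{a_n}\}$, a positive real diagonal matrix, and put $Q(t):=\Lambda^{-1}P(t)\Lambda^{-1}=\Ht{\Lambda^{-1}}P(t)\Lambda^{-1}$, using that $\Lambda$ is real. The point is that $\varphi_{Q(t)}$ should again be a geodesic whose $\mu_i$ are pure exponentials $e^{k_i t}$, so that Lemma \ref{linedecomp} supplies a lower unitriangular factor which, after multiplying back by $\Lambda$, yields the lower triangular factor with the prescribed diagonal.

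First I would check that $\varphi_{Q(t)}$ is a geodesic: by Remark \ref{cob}, the change of homogeneous coordinates $Z=\Lambda^{-1}W$ turns $\varphi_{P(t)}$ into $\varphi_{Q(t)}$, and such a coordinate change is a biholomorphism of $\PP^n$ carrying $\calH_H$ to itself, hence sends geodesics to geodesics; alternatively, Proposition \ref{introgeo} gives $P(t)=\Ht{A}e^{tD}A$, whence $Q(t)=\Ht{(A\Lambda^{-1})}e^{tD}(A\Lambda^{-1})$, which is again of the required form. Next I would compute $\mu_i(Q(t))$ by applying Claim \ref{mu} with the lower triangular matrix $\Lambda^{-1}$: this gives $\mu_i(Q(t))=\mu_i(P(t))\cdot|1/\sqrt{a_i}|^2=e^{k_i t}$. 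Now Lemma \ref{linedecomp} applies to $\varphi_{Q(t)}$, producing a unique lower unitriangular $\wt{L}$ with $Q(t)=\Ht{\wt{L}}e^{Kt}\wt{L}$ and $K=\diag\{k_0,\ldots,k_n\}$. Setting $L:=\wt{L}\Lambda$, which is lower triangular with $l_{ii}=\wt{l}_{ii}\sqrt{a_i}=\sqrt{a_i}$, gives $P(t)=\Lambda Q(t)\Lambda=\Ht{(\wt{L}\Lambda)}e^{Kt}(\wt{L}\Lambda)=\Ht{L}e^{Kt}L$, which is implication $1)\Rightarrow 2)$. For the converse, given such an $L$, Claim \ref{mu} applied to the lower triangular $L$ and the diagonal matrix $e^{Kt}$ yields $\mu_i(P(t))=e^{k_i t}|l_{ii}|^2=a_i e^{k_i t}$, which is $1)$. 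Finally, for uniqueness: statement $1)$ forces $K$ (via $\mu_i(P(t))=a_i e^{k_i t}$) and forces the diagonal of $L$ to be $\sqrt{a_i}$, hence forces $\Lambda$; factoring any admissible $L=\wt{L}\Lambda$ with $\wt{L}$ lower unitriangular, one has $\Ht{\wt{L}}e^{Kt}\wt{L}=\Lambda^{-1}P(t)\Lambda^{-1}=Q(t)$, so uniqueness of $\wt{L}$ from Lemma \ref{linedecomp} gives uniqueness of $L$.

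I expect no serious obstacle here: the statement is essentially Lemma \ref{linedecomp} seen through a diagonal rescaling. The only points that need genuine care are verifying that the diagonal conjugation preserves the geodesic property (which is exactly what Remark \ref{cob}, or Proposition \ref{introgeo}, is for) and the elementary bookkeeping of diagonal entries under the factorization $L=\wt{L}\Lambda$. If desired, the whole argument can be phrased compactly: everything passes through the map $P\mapsto\Lambda^{-1}P\Lambda^{-1}$, which commutes with the geodesic structure and scales $\mu_i$ by $1/a_i$, reducing the affine case to the linear case already handled.
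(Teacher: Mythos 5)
Your proof is correct and follows essentially the same route as the paper, which likewise conjugates by the diagonal matrix $E=\diag\{1/\sqrt{a_0},\ldots,1/\sqrt{a_n}\}$ and applies Lemma \ref{linedecomp} to $P'=\Ht{E}PE$. You simply spell out the details (preservation of the geodesic property, the scaling of $\mu_i$ via Claim \ref{mu}, and the factorization $L=\wt{L}\Lambda$ for uniqueness) that the paper leaves implicit.
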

\begin{proof}
Let $P'=\Ht{E}PE$, where
$E=\diag\{\frac{1}{\sqrt{a_0}},\ldots,\frac{1}{\sqrt{a_n}}\}$, and
apply Lemma \ref{linedecomp} to $P'$.
\end{proof}

\begin{proof}[Proof of Corollary \ref{introaff}]
By Theorem \ref{CompCheby}, the Chebyshev potential
$c[\varphi_{P(t)}]$ is affine if and only if $\log\mu_i(P(t))$ is
affine for any $i$. Using Proposition \ref{affdecomp} we conclude.
\end{proof}


\subsection{Discussion}

According to Mabuchi \cite{Mab} and Bando--Mabuchi \cite{BM} the
space of K\"ahler--Einstein metrics is connected and totally
geodesic submanifold of the space of \K metrics, and, further, it is
isometric to a finite-dimensional symmetric space of the form
$\Aut(X)/\hbox{\rm Iso}(X,\omega)$ where $\omega$ is a
K\"ahler--Einstein form. The geodesics of such symmetric spaces are
completely understood. It is tempting to conjecture that some of our
results should extend to this more general setting. However, there
are several potential pitfalls. First, one would need to extend the
Bando--Mabuchi results to the case of \K {\it potentials} and not
just {\it forms} (i.e., potentials mod $\R$). This can probably be
done following ideas in Darvas--Rubinstein (see, e.g.,
\cite[\S5.2]{DR}). Second, and more crucially, one would need to
compute Chebyshev sections, even with respect to a conveniently
chosen basis of anticanonical sections (assuming, say, that $-K_X$
is very ample).

Finally, we point out that one of key observations in the proof of
Theorem \ref{CompCheby} is that (in our simple setting of $\PP^n$
with the standard flag) the Chebyshev sections do not change along
the geodesic if and only if the associated geodesic in the symmetric
space of positive Hermitian matrices is of the form
$$
P\left(t\right)=\Ht{L}D\left(t\right)L,
$$
where $L$ is a lower unitriangular matrix independent of $t$, and
$D(t)$ is diagonal with positive diagonal entries. This essentially
means that while the curve of metrics is not a curve of toric
metrics, they are all toric up to a lower unitriangular
transformation, which are precisely the transformations that
preserve the flag, and hence the Chebyshev sections as well (unit is
needed to preserve the monic leading term)\footnote {After
completing this work and informing Reboulet of our results, he
kindly informed us of an independent and beautiful disproof of
Conjecture \ref{conj} using completely different ideas \cite{Reb2}
(although without explicit counterexamples nor explicit computation
of the Chebyshev transform).}.

\medskip
{\sc University of Maryland}

{\tt cjin123@terpmail.umd.edu, yanir@alum.mit.edu}

\end{document}